\newcommand{\Li}{L_\infty}
\newcommand{\LL}{\mathop{L}}
\newcommand{\T}{T}
\newcommand{\HH}[2]{\mathop{CH^\bullet_{e_{#1}}} (#2, #2)}
\newcommand{\RHom}[3]{\mathop{RHom^\bullet_{#1}}(#2, #3)}
\newcommand{\F}{F}
\newcommand{\R}[2]{{\mathbb R}^{#1}_{#2}}
\newcommand{\Hp}{{{\mathbb R}^2_{>0}}}
\newcommand{\C}{\mathscr{C}}
\newcommand{\SC}{\mathscr{SC}}
\newcommand{\Sh}[1]{\mathcal{F}_{#1}}
\newcommand{\set}[1]{\underline{\mathbf{#1}}}
\newcommand{\We}[2]{\mathop{\mathcal{W}^{#1}}(#2)}
\newcommand{\FM}{\mathbf{FM}_{n}}
\newcommand{\SCO}{\mathbf{SC}_{n}}
\newcommand{\Dil}[1]{\mathrm{Dil}(#1)}
\newcommand{\fm}[1]{\mathfrak{fm}_{#1}}
\newcommand{\CS}[2]{\C(#1)(#2)}
\newcommand{\ScS}[2]{\SC(#1)(#2)}
\newcommand{\Cs}[2]{\C^{0}(#1)(#2)}
\newcommand{\Scs}[2]{\C^{0}(#1)(#2)}
\newcommand{\gl}{SO(n)}
\newcommand{\Disk}{\mathbb{D}}
\newcommand{\Diskc}{\overline{\mathbb{D}}}
\newcommand{\Hom}{\mathop{\mathrm{Hom}}}
\newcommand{\tens}[1]{\mathbin{\mathop{\otimes}\limits_{#1}}}
\renewcommand{\Im}{\mathop{\mathrm{Im}}}
\newcommand{\Pro}{\varPhi}
\newcommand{\obl}[2]{\mathop\mathrm{obl}\nolimits_{#1}^{#2}}
\theoremstyle{plain}
\newtheorem{prop}{Proposition}
\newtheorem{conj}{Conjecture}
\newtheorem{theorem}{Theorem}
\newtheorem{cor}{Corollary}
\theoremstyle{definition}
\newtheorem{definition}{Definition}
\newtheorem{example}{Example}
\newtheorem*{example*}{Example}
\theoremstyle{remark}
\begin{document}

\title{Weyl $n$-algebras and the Swiss cheese operad}
\author{Nikita Markarian}

\date{}

\address{National Research University Higher School of Economics, Russian Federation,
Department of Mathematics, 20 Myasnitskaya str., 101000, Moscow,
Russia}

\email{nikita.markarian@gmail.com}

\thanks{The study has been funded within the framework of the HSE University Basic Research Program and the Russian Academic Excellence Project '5-100'.
}

\begin{abstract}
We apply Weyl $n$-algebras to 
prove   formality theorems for higher Hochschild cohomology.
We present two approaches: via propagators and via the factorization complex.
It is shown that the second  approach is
equivalent to the first one taken with a new family of propagators we introduce.
\end{abstract}

\maketitle

\section*{Introduction}

The present paper continues studies of Weyl $n$-algebras began in \cite{preW, W, WK}.
We describe how these ideas can be applied to prove formality theorems,
which are isomorphisms between higher Hochschild cohomology
of polynomial algebras and Weyl $n$-algebras.
The substantial part of this paper is  rephrasing and generalization
of the pioneer  paper \cite{K}, 
where the formality for usual Hochschild cohomology was firstly proved, 
in terms of Weyl $n$-algebras.

The construction from \cite{K} depends on  choice of a propagator.
There is another approach to  formality via
the factorization homology of Weyl $n$-algebras, 
which was implicitly stated and used in \cite{WK}. 
We show, that for the usual Hochschild cohomology
this formality is equivalent to the one introduced in \cite{K}
but with a different propagator.
Due to the geometric nature of this approach,
 all coefficients of this morphism are rational.
It leads us to a surprising conjecture
that a family of propagators we define
gives formalities with rational coefficients.

Two approaches to the formality described in the present paper
resemble two approaches to the Kontsevich integral of a knot.
The first one using iterated integrals  (see e.~g. \cite[Part 3]{CD})
is similar to the approach via propagator.
The second partly conjectural approach (see \cite{WK}
and references therein) corresponds to the one via the factorization complex.

First three section of the paper  does not contain
any new material.
In the first and second sections, we recall basic definitions
for the present series of papers
of the Fulton--MacPherson operad, Weyl $n$-algebras
and the factorization complex.

The third section
is devoted to the notion
of the Swiss Cheese operad, which was  introduced in
\cite{V,KO}, and the higher Hochschild cohomological complex.
A module over the Swiss Cheese operad
is a triple of an $e_n$-algebra, an $e_{n-1}$-algebra
and some additional data, which is referred to as an action of 
the $e_n$-algebra on the $e_{n-1}$-algebra. 
The main result of \cite{T}
states that given an action of an $e_n$-algebra
on an $e_{n-1}$-algebra, there is a morphism from this $e_n$-algebra
to the higher Hochschild complex of $e_{n-1}$-algebra.
We demonstrate, that for $n=2$
if one takes the  usual Hochschild cohomological complex
as a model for higher Hochschild complex,
the corresponding morphism 
of $L_\infty$-algebras is the one appeared 
in the proof of the formality theorem in \cite{K}.
The Hochschild cohomological complex introduced in 
in \cite{Ger}  is a dg-Lie (not $L_\infty$!) algebra. 
It seems to be an important feature, that 
it is equipped with a pre-Lie algebra
structure, which is not compatible with the differential.
It would be highly interesting to find some explanation 
 of the existence of such a model and 
discover some higher-dimensional generalization of it.
This generalization must be a dg-Lie algebra model of the $L_\infty$-algebra of the higher Hochschild complex.

In the fourth section
we construct  a quasi-isomorphism between the
Weyl $n$-algebra and the higher Hochschild cohomological complex
of the polynomial algebra.
This is a higher-dimensional generalization of the main construction of 
\cite{K} given in terms of Weyl $n$-algebras.
As was mentioned in \cite{K} for $n=2$, this construction works with any propagator. But \cite{K}  and later papers explore merely the same propagator
and its slight variations like in \cite{RW}.
The Conjecture \ref{conj} we formulate  implies
that there are other interesting propagators to work with.

In the fifth section following \cite{WK}
we use the factorization complex to build formality morphisms.
These formalities turn out to be equal to the ones
from the previous section for some particular propagators. 
The terms of these formality morphisms are given by integrals similar to the ones from \cite{AS}.

The key point in the factorization complex approach for an $e_3$-algebra
and 1-sphere (see also \cite{WK})
 is an isomorphism between the Hochschild cohomological
complex of the polynomial algebra and the Hochschild homological
complex of the $e_3$-algebra.
The latter is equipped with the cyclic structure.
But the quasi-isomorphism given by Proposition \ref{sphere} 
does not respect it. Consequently, there is
some interesting 
interaction between this structure and
 the 
formality given by the factorization complex approach.
The paper \cite{WK} may be considered as the first step in studying this interaction.   
Besides as was mentioned in \cite{K} and developed in
later papers (see \cite{RW} and references therein)
the set of formalities for the usual Hochschild cohomological
complex of a polynomial algebra is equipped
with a rich additional structure such as
the Grothendieck--Teichm\"uller
Lie algebra action.  
The interaction of this structure and the structure mentioned
above is a subject for future research.

{\bf Acknowledgments.} I am grateful to  D.~Calaque, V.~Dotsenko,
B.~Feigin,  A.~Khoroshkin, S.~Mer\-ku\-lov,
B.~Shoikhet, D.~Tamarkin and A.~Voronov for fruitful discussions. 

\section{Weyl $n$-algebras}

\subsection{ Fulton--MacPherson operad}

\label{FMC}

Let $\R{n}{}$ be an affine space. For a finite set $S$  denote by $(\R{n}{})^S$ the set of ordered $S$-tuples in $\R{n}{}$.
Let $\Cs{\R{n}{}}{S}\subset (\R{n}{})^S$ 
be the configuration space of distinct ordered points in $\R{n}{}$ labeled by $S$.
In \cite{GJ, Ma} (see also \cite{PS} and \cite{AS}) the Fulton--MacPherson 
compactification 
$\CS{\R{n}{}}{S}$ of $\Cs{\R{n}{}}{S}$ is introduced.
This is a manifold with corners and a boundary with interior
$\imath\colon \Cs{\R{n}{}}{S}\hookrightarrow \CS{\R{n}{}}{S} $.
There is a projection $\pi\colon \CS{\R{n}{}}{S}\to (\R{n}{})^S$
such that $\pi\circ\imath\colon\Cs{\R{n}{}}{S} \to (\R{n}{})^S $ is the natural embedding.
For any $S'\subset S$ there is the projection map
\begin{equation*}
 \CS{\R{n}{}}{S} \to \CS{\R{n}{}}{S'},
\end{equation*}
which forgets points.

The natural action of the group of affine transformations on $\Cs{\R{n}{}}{S}$ is lifted to
$\CS{\R{n}{}}{S}$. Denote by $\Dil{n}$ its subgroup consisting of dilatations  with positive coefficients and shifts. 
Group $\Dil{n}$ acts freely on $\CS{\R{n}{}}{S}$ and the quotient is isomorphic to
the fiber $\pi^{-1}(\vec{0})$, where $\vec{0}\in(\R{n}{})^S$ is the $S$-tuple
sitting at the origin (see e.~g. \cite[2.2]{W}).
Denote any of these isomorphic manifolds by $\FM^{S}$.
The sequence $\FM^{S}$ may be equipped with a structure
of an unital operad in the category of topological spaces,
for details see  \cite[2.2]{W} and other references above. 

\begin{definition}
The sequence of topological spaces $\FM^{S}$ with the unital operad structure as above
is called the Fulton--MacPherson operad.
\end{definition}

Given a topological operad, one may produce a dg-operad by
taking complexes of chains of its components.

\begin{definition}
Denote by $\fm{n}$ the operad of $\mathbb R$-chains of $\FM$. 
\label{chains}
\end{definition}

Real numbers appear here are to simplify things, all object
and morphism we shall use may be defined over rationals.
By chains we mean the complex of de Rham currents, that is why we need real chains.
Mostly below we will consider the cooperad of de Rham cochains of $\FM$.
 
\begin{prop}
Operad $\fm{n}$ is weakly homotopy equivalent to $e_n$,
the operad of chains of the little discs operad.
\label{eq}
\end{prop}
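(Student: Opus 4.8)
The plan is to deduce the statement from a comparison of the underlying topological operads and then apply the chains functor. Since the complex of $\mathbb{R}$-chains is a lax symmetric monoidal functor from spaces to complexes which carries weak homotopy equivalences to quasi-isomorphisms, it sends topological operads to dg-operads and componentwise weak equivalences of topological operads to quasi-isomorphisms of dg-operads. Hence it suffices to exhibit a zig-zag of maps of topological operads between $\FM$ and the little discs operad $D_n$, each of which is a weak homotopy equivalence on every component; applying chains then yields the asserted weak homotopy equivalence between $\fm{n}$ and $e_n$.

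For the topological comparison I would first observe that the open inclusion $\imath\colon \Cs{\R{n}{}}{S}\hookrightarrow\CS{\R{n}{}}{S}$ is a homotopy equivalence, realised by pushing inward along the collars of the boundary strata of the manifold with corners $\CS{\R{n}{}}{S}$. Passing to the free $\Dil{n}$-action and using the identification $\FM^{S}\cong\pi^{-1}(\vec 0)\cong\CS{\R{n}{}}{S}/\Dil{n}$, we get that $\FM^{S}$ is homotopy equivalent to the space of $S$-configurations of distinct points in $\R{n}{}$ taken up to positive dilatation and translation. On the other side, sending a configuration of pairwise disjoint round discs in the unit disc to the configuration of its centers defines a map $D_n(S)\to\FM^{S}$; it is a homotopy equivalence because, with the centers fixed, the space of admissible radii is convex and hence contractible.

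The step that requires real care — and the one I expect to be the main obstacle — is checking that these componentwise equivalences can be organised into honest maps of operads. The operadic composition in $D_n$ is insertion of a rescaled disc configuration into a chosen disc, whereas in $\FM$ it is the corresponding limit in which inserted points collide and their relative configuration is recorded by the infinitesimal screen data living on the boundary strata of $\CS{\R{n}{}}{S}$. These two compositions must be matched under the center map, and because the boundary data of $\FM$ has no literal analogue among discs of strictly positive radius, the comparison is most safely phrased as a zig-zag through an intermediate operad, as in the standard treatments of the Fulton--MacPherson operad as a model for $E_n$. Once the maps are operadic and componentwise weak equivalences, functoriality of chains produces quasi-isomorphisms of dg-operads at each stage, and composing the zig-zag proves that $\fm{n}$ is weakly homotopy equivalent to $e_n$.
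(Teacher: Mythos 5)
Your overall strategy --- compare the underlying topological operads and then apply the $\mathbb{R}$-chains functor, which turns componentwise weak equivalences of topological operads into quasi-isomorphisms of dg-operads --- is the standard one, and it is exactly the route taken by the sources the paper invokes; the paper itself offers no argument at all beyond the citations \cite[Proposition 3.9]{PS} and \cite[3.3]{W}. Your componentwise statements are also fine: the inclusion of the interior $\Cs{\R{n}{}}{S}/\Dil{n}\hookrightarrow\FM^{S}$ is a homotopy equivalence of a manifold with corners with its interior, and the centers map from the little discs operad is a componentwise equivalence because, for fixed centers, the admissible radii are cut out by linear inequalities and hence form a nonempty convex set.

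As a proof, however, your text stops exactly at the point you yourself identify as the main obstacle: the zig-zag of honest operad maps is never produced. Componentwise homotopy equivalences carry no information about compositions, and the centers map is genuinely not operadic (composition in the little discs operad keeps the inserted points at finite mutual distance, while composition in $\FM$ collides them onto a boundary stratum of $\CS{\R{n}{}}{S}$), so the construction of the intermediate operad and of the two componentwise-equivalent operad maps out of it is the entire mathematical content of the proposition --- and in your write-up it is an appeal to ``standard treatments'' with no construction and no reference. Concretely, what fills this hole is, for instance, Salvatore's argument: take the Boardman--Vogt $W$-construction $W(D_n)$, whose points are trees with disc configurations at the vertices and lengths on the edges; the counit $W(D_n)\to D_n$ is an operad map and a componentwise equivalence, and one constructs an explicit operad map $W(D_n)\to\FM$ (an edge of length one is sent to a fully collapsed, i.e.\ infinitesimal, cluster recorded by the screen data of the corresponding boundary stratum), which is again a componentwise equivalence. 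Only once such a zig-zag is in place does your chains argument apply. So: right plan, correct reductions, but the decisive step is named rather than proved; to make this complete you should either carry out the $W$-construction comparison or cite the specific results that the paper itself relies on.
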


\begin{proof}
See \cite[Proposition 3.9]{PS} and   \cite[3.3]{W}.
\end{proof}

Spaces $\FM^{S}$ are acted on by the general linear group, and, in particular,
by its maximal compact subgroup $\gl$, we suppose that a scalar
product on the space is chosen. One may consider the semi-direct
product of $\gl$ and the Fulton--MacPherson operad
and algebras over it.
But we will need only the following special case 
of such algebras.

\begin{definition}[\cite{W}, Definition 3]
We say that a dg-algebra $A$ over $\fm{n}$ is  invariant,
if all structure maps of complexes
$$
\fm{n} \otimes A\otimes\cdots\otimes A\to A
$$
are invariant under the action of group $\gl$ on complexes of operations
of $\fm{n}$.
\label{inv}
\end{definition}

Note that we mean invariance on the level of complexes, not up to homotopy.

\subsection{Weyl $n$-algebras}
\label{weyl}

The algebras over operad $\fm{n}$ we need below are  Weyl $n$-algebras.
Recall its definition, which  slightly differs from the one given in \cite{W}.
The difference is in the quantization parameter $h$: 
in the mentioned paper we considered algebras over formal series of $h$
since below we suppose that $h=1$. 

Let $n>1$ be a natural number.
Let $V$ be a $\mathbb{Z}$-graded  finite-dimensional vector space  over the base field $\Bbbk$
of characteristic zero containing $\mathbb R$ equipped with a non-degenerate  skew-symmetric pairing 
$\omega\colon V\otimes V\to \Bbbk$
of degree $1-n$. Let $\Bbbk[V]$ be the polynomial algebra generated by $V$.
Denote by 
\begin{equation}
\partial_\omega \colon \Bbbk[V]\otimes \Bbbk[V] \to \Bbbk[V]\otimes \Bbbk[V]
\label{omega}
\end{equation}
the differential operator that is a derivation in each factor
and acts on generators as $\omega$. 

Consider $\FM(\set{2})$,  the space of 2-ary operations 
of the Fulton--MacPherson operad. This is  homeomorphic to the $(n-1)$-dimensional
sphere. Denote by $\mathfrak{v}$ the standard $\gl$-invariant 
$(n-1)$-differential form on it.  For any two-element subset $\{i,j\}\subset S$ 
denote by $p_{ij}\colon \FM(S)\to \FM(\set{2})$ the map that forgets
all points except ones marked by $i$ and by $j$. 
Denote by $\mathfrak{v}_{ij}$ the pullback of $\mathfrak{v}$ under projection $p_{ij}$.
Let $\alpha$ be an element of endomorphisms of
$\Bbbk[V]^{\otimes S} \tens{Aut (S)}C^*(\FM(S))$ (where $C^*(-)$ is the de Rham complex)
given by 
$$
\alpha= \sum_{i,j\in S}\partial_\omega^{ij}\wedge \mathfrak{v}_{ij} ,
$$
where $\partial_\omega^{ij}$ is the operator $\partial_\omega$
applied to the $i$-th and $j$-th factors.
 
\begin{prop}
The composition
$$
\Bbbk[V]^{\otimes S}\stackrel{\exp(\alpha)}{\longrightarrow}\Bbbk[V]^{\otimes S}\otimes C^*(\FM(S))\stackrel{\mu}{\to}\Bbbk[V]\otimes C^*(\FM(S)),
$$
where $\mu$ is the product in the polynomial algebra,
defines an algebra over the operad $\fm{n}$ with the underlying space
$\Bbbk[V]$.
\label{prod}
\end{prop}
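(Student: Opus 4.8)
The plan is to verify that the collection of maps $m_S := \mu\circ\exp(\alpha)$, one for each finite set $S$, assembles into an algebra over the chain operad $\fm{n}$, equivalently over the de Rham cochain cooperad $C^*(\FM)$ whose cocomposition is pullback along the gluing maps of $\FM$. Concretely this requires three things: each $m_S$ is a well-defined, degree-zero, $\mathrm{Aut}(S)$-equivariant map; each $m_S$ is compatible with the differential; and the family $\{m_S\}$ is compatible with operadic composition. The first two are short; the associativity is the substance.

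First I would settle well-definedness and the differential. Since $\dim\FM(S)<\infty$, forms above that degree vanish and $\exp(\alpha)=\sum_k\alpha^k/k!$ is a finite sum; as $\partial_\omega$ has degree $1-n$ and $\mathfrak{v}$ degree $n-1$, each summand $\partial_\omega^{ij}\wedge\mathfrak{v}_{ij}$ has degree $0$, so $m_S$ has degree $0$. To see $\alpha$ is well defined on unordered pairs, note that interchanging $i$ and $j$ composes $p_{ij}$ with the antipodal map of $\FM(\set{2})\cong S^{n-1}$, multiplying $\mathfrak{v}$ by $(-1)^n$, while $\partial_\omega^{ji}$ differs from $\partial_\omega^{ij}$ by exactly the compensating Koszul sign from skew-symmetry of $\omega$, so each term is symmetric in $\{i,j\}$; invariance of $\mathfrak{v}$ under $\gl$ yields $\mathrm{Aut}(S)$-equivariance. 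For the differential: $\Bbbk[V]$ carries the zero differential, so being a chain map means $m_S$ lands in closed forms, and indeed $\mathfrak{v}$ is the top-degree form on $S^{n-1}$, hence closed, so every $\mathfrak{v}_{ij}=p_{ij}^*\mathfrak{v}$ and every wedge of them is closed and $(\id\otimes d)\circ m_S=0$.

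The main step is operadic associativity, for which it suffices to treat the two-level compositions indexed by a partition of $S$ into blocks $\{S_a\}$ with quotient set $Q$ and gluing map $\gamma\colon\FM(Q)\times\prod_a\FM(S_a)\to\FM(S)$. The key geometric input is the behavior of the propagators under $\gamma$: pulling back, $\gamma^*\mathfrak{v}_{ij}$ equals the internal form $\mathfrak{v}_{ij}$ of the factor $\FM(S_a)$ when $i,j$ lie in the same block $S_a$, and equals the pullback from $\FM(Q)$ of $\mathfrak{v}_{ab}$ when $i\in S_a$, $j\in S_b$ with $a\neq b$, because on the corresponding boundary stratum the relative direction of two points in distinct infinitesimally small blocks is the direction between the block centers. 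Splitting $\sum_{i,j\in S}$ into within-block and between-block parts and substituting, the two kinds of operator-valued forms act on disjoint tensor slots and commute up to sign, so $\gamma^*\exp(\alpha)$ factors as the product of the within-block exponentials $\prod_a\exp(\alpha^{(a)})$ with the between-block exponential. Applying the within-block multiplications turns the first factor into $\bigotimes_a m_{S_a}$, landing in $\Bbbk[V]^{\otimes Q}$; and since $\partial_\omega$ is a derivation in each factor, the Leibniz rule identifies $\sum_{i\in S_a,\,j\in S_b}\partial_\omega^{ij}$, after those multiplications, with $\partial_\omega^{ab}$, so the between-block factor becomes exactly the operator defining $m_Q$. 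A final application of $\mu$ reproduces $m_S$, giving the required commutativity; unitality is immediate since $m_{\set{1}}=\id$.

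I expect the hard part to be the geometric lemma on the restriction of $\mathfrak{v}_{ij}$ to the boundary strata of the Fulton--MacPherson compactification, together with the careful matching of the Koszul signs of the graded pairing $\omega$ against the orientation signs appearing in the operadic composition. Once those are in place, the factorization of $\exp(\alpha)$ and the Leibniz identification of the between-block operators are formal.
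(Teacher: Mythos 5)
Your proof is correct: the paper itself dispenses with this proposition as ``a simple check,'' and your argument is precisely that check carried out in full --- closedness of $\mathfrak{v}_{ij}$ giving compatibility with the (zero) differential, the standard boundary-stratum behavior of the forgetful maps $p_{ij}$ under the gluing maps of $\FM$ giving the factorization of $\gamma^*\exp(\alpha)$, and the biderivation property of $\partial_\omega$ identifying the between-block operator with the one defining $m_Q$. You correctly isolate the only two nontrivial inputs (the stratum restriction of $\mathfrak{v}_{ij}$ and the Koszul-sign matching for the degree-$(1-n)$ skew pairing against the degree-$(-1)^n$ antipodal map), so nothing essential is missing relative to what the paper asserts.
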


\begin{proof}
This is a simple check. 
\end{proof}

The algebra defined in this way is obviously invariant under the action of $\gl$,
thus it is invariant (see Definition \ref{inv}).

\begin{definition}[{\cite{W}}]
For a pair $(V, \omega)$ and $n>1$ as above the invariant $\fm{n}$-algebra given
by Proposition \ref{prod} is called the  Weyl $\fm{n}$-algebra
or the Weyl $n$-algebra. Denote it by $\We{n}{V}$.
\label{wna}
\end{definition}

The  Weyl $1$-algebra is the usual Weyl algebra generated
by a a $\mathbb{Z}$-graded  finite-dimensional vector space $V$ with relations $[x,y]=(x,y)$,
where $x, y\in V$ and
$(\cdot,\cdot)$ is a perfect pairing of degree $0$ on $V$. 
Below we will use this definition only in Proposition \ref{sphere}. 

Note that Proposition \ref{eq} provides us with a notion of  Weyl $e_n$-algebras.

The natural map of of operads $\fm{m}\to \fm{n}$ for $m<n$
induces the functor from $\fm{n}$-algebras to $\fm{m}$-algebras.
As in \cite{WK} denote it by $\obl{n}{m}$.
\begin{prop}
For $m<n$ the $\fm{m}$-algebra $\obl{n}{m} \We{n}{V}$ is isomorphic to the commutative 
polynomial algbebra $\Bbbk[V]$.
\label{obl}
\end{prop}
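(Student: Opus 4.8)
The plan is to trace through the explicit formula of Proposition \ref{prod} and observe that the propagator form $\mathfrak{v}$ vanishes after restriction to the lower-dimensional configuration spaces, so that the exponential collapses to the bare multiplication.

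First I would recall that the operad map $\fm{m}\to\fm{n}$ underlying $\obl{n}{m}$ is induced by the inclusion $\R{m}{}\hookrightarrow\R{n}{}$ of affine spaces, which yields compatible maps of compactified configuration spaces $\iota\colon \mathbf{FM}_m^S\hookrightarrow \mathbf{FM}_n^S$ and hence, on chains, the desired morphism of operads. Restricting a $\fm{n}$-algebra along $\iota$ means precomposing its structure maps with $\iota$; dually, since the Weyl structure of Proposition \ref{prod} is encoded by the de Rham cochain-valued map $\Bbbk[V]^{\otimes S}\to\Bbbk[V]\otimes C^*(\mathbf{FM}_n^S)$, the restriction is obtained by composing with the pullback $\iota^*\colon C^*(\mathbf{FM}_n^S)\to C^*(\mathbf{FM}_m^S)$.

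The key step is the computation $\iota^*\mathfrak{v}_{ij}=0$. The two-point space $\mathbf{FM}_m(\set{2})$ is the sphere $S^{m-1}$, sitting as a great subsphere inside $\mathbf{FM}_n(\set{2})\cong S^{n-1}$. Since $\mathfrak{v}$ is an $(n-1)$-form while $\dim \mathbf{FM}_m(\set{2})=m-1<n-1$, its pullback to $\mathbf{FM}_m(\set{2})$ vanishes for purely dimensional reasons. By naturality of the forgetful projections $p_{ij}$ with respect to $\iota$ (the square relating the $m$- and $n$-dimensional forget maps commutes), this propagates to $\iota^*\mathfrak{v}_{ij}=p_{ij}^*(\iota^*\mathfrak{v})=0$ for every pair $\{i,j\}$. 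Consequently $\iota^*\alpha=0$, and therefore $(\id\otimes\iota^*)\exp(\alpha)=\exp(\iota^*\alpha)=\id\otimes 1$.

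It follows that the restricted structure map is simply the multiplication $\mu\colon\Bbbk[V]^{\otimes S}\to\Bbbk[V]$ tensored with the constant cochain $1\in C^*(\mathbf{FM}_m^S)$. Pairing with a chain in $\fm{m}(S)$ then recovers the commutative multiplication weighted by the augmentation of that chain, which is precisely the $\fm{m}$-algebra structure carried by the polynomial algebra $\Bbbk[V]$ regarded as a commutative algebra. Hence the identity on underlying complexes furnishes an isomorphism $\obl{n}{m}\We{n}{V}\cong\Bbbk[V]$ of $\fm{m}$-algebras. I expect the only delicate point to be the bookkeeping of the naturality square for the maps $p_{ij}$ under $\iota$, ensuring that the dimensional vanishing of $\mathfrak{v}$ on $S^{m-1}$ indeed forces each pulled-back form $\mathfrak{v}_{ij}$ on the full configuration space to vanish; everything else is the formal consequence that $\exp$ of a vanishing form is the identity.
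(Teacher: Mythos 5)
Your proof is correct and is essentially the paper's own argument: the paper disposes of this proposition with the single line ``It follows from the very definition of the Weyl $\fm{n}$-algebra,'' and your dimension-count ($\iota^*\mathfrak{v}=0$ because an $(n-1)$-form pulls back to zero on the $(m-1)$-sphere, hence $\exp(\alpha)$ collapses to the identity and only the bare multiplication $\mu$ survives) is precisely the content hiding behind that assertion.
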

\begin{proof}
It follows from the very definition of the Weyl $\fm{n}$-algebra.
\end{proof}

\subsection{Lie algebra}

Recall the construction of a morphism from the shifted $L_\infty$ operad to $\fm{n}$,
see e.~g. \cite[2.3]{W}.

Spaces of operations of the Fulton--MacPherson operad are equipped with a stratification
labeled by trees as follows.
As an operad of sets $\FM$ is freely generated  by $\Cs{\R{n}{}}{S}/\Dil{n}$.
Denote by $\mu$ the map from this free operad to the free
operad with one generator in each arity, which sends  generators to generators.
Elements of the latter operad are enumerated by rooted trees. The map above sends
$\C_{\set{k}}^0(\R{n}{})/\Dil{n}$ to the star tree with $k$ leaves.
For a tree $t\in\T(S)$ denote by 
$[\mu^{-1} (t)]\in C_*(\F_n(S))$ the chain presented by its preimage under $\mu$.
The  operad $L_\infty$ is a semi-free operad with generators labeled by trees,
see e.~g. \cite{GK}. One may see that $[\mu^{-1} (\cdot)]$
commutes with differentials. It gives us the following statement.

\begin{prop}
Map $[\mu^{-1} (\cdot)]$ as above gives a morphism 
\begin{equation}
\Li[1-n]\to \fm{n}
\label{morphism}
\end{equation}
from shifted $L_\infty$ operad   
to the $dg$-operad $\fm{n}$ of chains 
of the Fulton--MacPherson operad.
\label{lie}
\end{prop}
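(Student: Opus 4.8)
The plan is to specify the morphism on generators and then verify separately that the resulting assignment respects operadic composition and the two differentials. Recall that $\Li$, and hence its shift $\Li[1-n]$, is semi-free: as a graded operad it is free, with one symmetric generator $l_k$ in each arity $k\ge 2$, so a basis of its underlying collection is indexed by the trees $\T(S)$, the generator for $|S|=k$ being the corolla (star tree) with leaf set $S$. I would send this corolla to the fundamental current $[\FM^{S}]$; since the open stratum $\Cs{\R{n}{}}{S}/\Dil{n}$ is dense in $\FM^{S}$, this is exactly $[\mu^{-1}(\text{corolla})]$. Extending along the free structure then yields a morphism of graded operads that on an arbitrary tree $t$ returns $[\mu^{-1}(t)]$. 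The degrees are arranged correctly: $\dim\FM^{S}=n(|S|-1)-1$, and the shift $[1-n]$ is precisely what places the arity-$k$ generator in homological degree $n(k-1)-1$.

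First I would check compatibility with composition. Here the statement that $\FM$ is freely generated as an operad of sets by the open strata $\Cs{\R{n}{}}{S}/\Dil{n}$ does the work: grafting of trees corresponds exactly to the operadic gluing maps $\FM^{S/T}\times\FM^{T}\to\FM^{S}$ (where $S/T$ denotes $S$ with the subset $T$ collapsed to a single leaf), and under such a map the product of the fundamental currents of two tree-strata is the fundamental current of the grafted tree-stratum. Hence $[\mu^{-1}(t_1)]\circ_i[\mu^{-1}(t_2)]=[\mu^{-1}(t_1\circ_i t_2)]$, and, since relabeling leaves matches relabeling points, the map is equivariant. This gives a well-defined morphism of graded operads $\Li[1-n]\to\fm{n}$.

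The essential point is compatibility with the differentials, and by the Leibniz rule for the boundary operator it suffices to treat the generators. The differential of $\Li[1-n]$ sends $l_k$ to the signed sum, over subsets $T\subset S$ with $2\le|T|\le|S|-1$, of the two-vertex trees obtained by bracketing $T$ first. On the geometric side, Stokes' theorem for manifolds with corners expresses $\partial[\FM^{S}]$ as the signed sum of its codimension-one faces. The key feature of the Fulton--MacPherson compactification is that every codimension-one face is principal: it is the locus where exactly one subset $T$ collides, and it is canonically identified with $\FM^{S/T}\times\FM^{T}$, which is $\mu^{-1}$ of the two-vertex tree that brackets $T$. Thus the faces of $\partial[\FM^{S}]$ are, term by term, the chains $[\mu^{-1}(-)]$ appearing in $[\mu^{-1}(d\,l_k)]$, and the two differentials agree.

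The hard part will be the orientation and sign bookkeeping. One must fix compatible orientations on the strata $\mu^{-1}(t)$ so that the boundary orientations produced by Stokes' theorem reproduce exactly the Koszul signs in the $L_\infty$ differential, and confirm that the shift $[1-n]$ is what synchronizes the reduction of dimension by one under $\partial$ with the operadic differential. One should also check that there are no hidden codimension-one faces beyond the principal ones; this is precisely the principal-face property of the compactification recalled above, and it is what guarantees that $\partial[\FM^{S}]$ contains no extra terms. With signs and orientations matched, $[\mu^{-1}(\cdot)]$ commutes with the differentials, establishing the morphism \eqref{morphism}.
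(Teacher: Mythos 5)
Your proposal is correct and takes essentially the same route as the paper: the paper's own proof is just a citation to \cite[Proposition 2]{W}, but the construction it sets up immediately before the proposition --- the stratification of $\FM^{S}$ by trees, the map $\mu$, and the identification of the semi-free $\Li$ generators with corollas --- is exactly what you flesh out, with the degree count $n(k-1)-1$, free generation of $\FM$ as an operad of sets handling composition, and Stokes' theorem together with the principal-face property of the Fulton--MacPherson boundary handling the differentials. The orientation and sign bookkeeping you defer is indeed the only remaining detail, and the paper likewise delegates it to the cited reference.
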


\begin{proof}
See e.~g. \cite[Proposition 2]{W}.
\end{proof}

\begin{definition}
For a $\fm{n}$-algebra $A$ call its pull-back under (\ref{morphism})
the  associated $\Li$-algebra and denote it by $\LL(A)$.
\label{la}
\end{definition}

Consider the $\Li$-algebra  $\LL(\We{n}{V})$ associated with
the Weyl $n$-algebra. By the very definition,
all operations on it are given by integration of closed forms by chains of the Fulton--MacPherson
operad. But one may see, that chains representing higher operations
(that is operations, which are not compositions of Lie brackets) in $\Li$
are all homologous to zero, because $\Li$ is a resolution of the Lie operad. 
Thus $\LL(\We{n}{V})$ is a $\mathbb{Z}$-graded Lie algebra, that is all higher operations vanish.
This Lie algebra $\LL(\We{n}{V})$ is a deformation of the Abelian one.
The first order deformation gives the  Poisson Lie algebra: 
the underlying space is the $\mathbb{Z}$-graded commutative algebra
$\Bbbk[V]$, the bracket is defined by $\omega\colon V\otimes V\to k$ 
on generators and
satisfies the Leibniz rule.

\begin{prop}
For $n>1$
Lie algebra $\LL(\We{n}{V})$ is isomorphic to the Poisson Lie algebra
of $(\Bbbk[V], \omega)$.
\label{poiss}
\end{prop}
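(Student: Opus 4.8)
The plan is to lean on the facts already assembled in the paragraph preceding the statement and reduce everything to a single computation on the $(n-1)$-sphere. From that discussion the associated $\Li$-algebra $\LL(\We{n}{V})$ has vanishing higher operations, so it is an honest $\mathbb{Z}$-graded Lie algebra whose underlying space is $\Bbbk[V]$ and whose internal differential is zero (the polynomial algebra carries none). Thus the only thing left to check is that its binary bracket coincides with the Poisson bracket determined by $\omega$. By Proposition \ref{lie} the binary generator of $\Li[1-n]$ is sent under (\ref{morphism}) to the chain $[\mu^{-1}(t)]$ for the two-leaf tree, that is, to the fundamental class of $\Cs{\R{n}{}}{\set{2}}/\Dil{n}\cong\FM(\set{2})$, which is the $(n-1)$-sphere.

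First I would write the bracket of $a,b\in\Bbbk[V]$ explicitly through the structure map of Proposition \ref{prod}, namely
\[
[a,b]=\int_{\FM(\set{2})}\mu\bigl(\exp(\alpha)(a\otimes b)\bigr),
\]
where on the two-point configuration space $\alpha$ reduces to $\partial_\omega^{12}\wedge\mathfrak{v}_{12}$ (up to the evident symmetrization over the ordered pair). The key observation is that $\mathfrak{v}_{12}$ is an $(n-1)$-form on the $(n-1)$-dimensional space $\FM(\set{2})$, so for $n>1$ one has $\mathfrak{v}_{12}\wedge\mathfrak{v}_{12}=0$; hence $\exp(\alpha)=1+\partial_\omega^{12}\wedge\mathfrak{v}_{12}$ with no higher terms. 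This truncation is exactly the point where $n>1$ is used.

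Integrating against the fundamental cycle, the summand $1$ drops out, since it contributes the $0$-form $\mu(a\otimes b)$ integrated over a cycle of positive dimension, while the remaining summand yields $\mu(\partial_\omega(a\otimes b))\cdot\int_{\FM(\set{2})}\mathfrak{v}$. With the normalization $\int_{\FM(\set{2})}\mathfrak{v}=1$ of the standard $\gl$-invariant form this is precisely $\mu\circ\partial_\omega(a\otimes b)$. It then remains to recognize $\mu\circ\partial_\omega$ as the Poisson bracket: since $\partial_\omega$ is by construction a biderivation acting on generators as $\omega$, the composite $\mu\circ\partial_\omega$ is a biderivation of $\Bbbk[V]$ restricting to $\omega$ on $V$, which is the definition of the Poisson bracket of $(\Bbbk[V],\omega)$. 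Together with the vanishing of the higher brackets, which also forces the Jacobi identity through the arity-three $\Li$-relation (with $l_1=l_3=0$), this produces the asserted isomorphism, degrees matching because both sides carry the bracket of degree $1-n$ of $\omega$.

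I expect the only genuine subtleties to be bookkeeping rather than structural. One must track the signs and the factor coming from the doubled sum over the ordered pairs $(1,2)$ and $(2,1)$ in $\alpha$, together with the behaviour of $\mathfrak{v}$ under the antipodal swap of $\FM(\set{2})$; these have to conspire with the chosen normalization of $\mathfrak{v}$ so that the outcome is exactly the Poisson bracket and not a scalar multiple of it. The conceptually load-bearing input, namely the vanishing of all higher operations, I would simply import from the resolution argument already given before the statement (the higher generators of $\Li$ map to null-homologous chains in $\fm{n}$, and a closed form integrated against such a chain vanishes), rather than re-deriving it here.
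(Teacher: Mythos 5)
Your proof is correct and is essentially the paper's own argument, expanded: the paper's entire proof is the remark that for $n>1$ the square of the de Rham cochain $\mathfrak{v}$ vanishes, which is exactly your truncation $\exp(\alpha)=1+\partial_\omega^{12}\wedge\mathfrak{v}_{12}$ on the $(n-1)$-dimensional space $\FM(\set{2})$, after which the pairing with the fundamental cycle yields $\mu\circ\partial_\omega$, i.e.\ the Poisson bracket. The vanishing of higher operations is likewise taken from the discussion preceding the statement in the paper, just as you do.
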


\begin{proof}
Clear, because for $n>1$ the square of the de Rham cochain $v$ is zero.
\end{proof}

\section{Factorization complex}

\subsection{Factorization complex}
\label{FH}

The factorization complex of an algebra
over the framed $n$-disks operad on a manifold
is the tensor product of the right module
over the framed $n$-disks operad corresponding to the manifold
and the left one defined by the algebra, see e.~g. \cite{G}.
For an invariant $\fm{n}$-algebra
we will use a simplified version of this definition following  \cite{W}.

Let  $M$ be a $n$-dimensional oriented  manifold.
In the same way, as for $\R{n}{}$, there is the
Fulton--MacPherson compactification
$\CS{M}{S}$ of the space $\Cs{M}{S}$ of ordered pairwise distinct points in $M$
labeled by $S$. Locally it is the same thing.
Inclusion $\Cs{M}{S}\hookrightarrow \CS{M}{S}$ is a homotopy equivalence,
there is  a projection $\CS{M}{S} \stackrel{\pi}{\to} M^S$.

Recall that a point in the Fulton--MacPherson compactification $\CS{\R{n}{}}{S}$ of 
the configuration space of $\R{n}{}$
looks like a configuration from the configuration space $\Cs{\R{n}{}}{S'}$
with elements of $\FM$ sitting at each point of the configuration.
It follows that spaces $\CS{\R{n}{}}{\bullet}$ form a right $\FM$-module, 
which  is freely generated by $\Cs{\R{n}{}}{\bullet}$ as a set.
The same is nearly true for the Fulton--MacPherson compactification
of any oriented manifold $M$. But to define such an action one needs to choose
coordinates at the tangent space of any point of a configuration of $\CS{M}{S}$.
To fix it one has to consider either only framed
manifolds or introduce framed configuration space. 
For invariant algebras these problems vanish.

\begin{definition}[{\cite[Proposition 3]{W}}]
For an invariant unital $\fm{n}$-algebra $A$ and an oriented manifold $M$ the
factorization complex $\int_M A$ is
the complex given by the colimit of the diagram
\begin{equation}
\begin{tikzcd}
\bigoplus\limits_{S'} C_*(\CS{M}{S'})\tens{Aut(S')} A^{\otimes S'}\\
\bigoplus\limits_{i\colon S'\to S}C_*(\Cs{M}{S})\tens{Aut(S)}\bigotimes\limits_{s\in 
S} 
(\fm{n}(i^{-1}s)\tens{Aut(i^{-1}s)} A^{\otimes 
{(i^{-1}s)}}) \arrow[u]\arrow[d]\\
\bigoplus\limits_{S}C_*(\Cs{M}{S})\tens{Aut(S)} A^{\otimes S} 
\end{tikzcd}
\label{coend}
\end{equation} 
where the summation in the middle runs over maps between finite sets,
the downwards arrow is given by the left action of $\fm{n}$ on $A$ for
$\Im i$ and the unit for $S\setminus \Im i$
and the upwards arrow is given by the right action of $\fm{n}$ on 
$C_*(\CS{M}{\bullet})$.
\label{colimit}
\end{definition}

Note that relations (\ref{coend})
include in particular colimits
\begin{equation}
\begin{tikzcd}
\bigoplus\limits_{S'} C_*(\CS{M}{S'})\tens{Aut(S')} A^{\otimes S'}\\
\bigoplus\limits_{i\colon S'\hookrightarrow S}C_*(\CS{M}{S})\tens{Aut(S')} A^{\otimes S'} 
\arrow[u]\arrow[d, "{\otimes 1^{(S\setminus S')}}"]\\
\bigoplus\limits_{S}C_*(\CS{M}{S})\tens{Aut(S)} A^{\otimes S} 
\end{tikzcd}
\label{reduced}
\end{equation}
where the upward arrow is the projection, which forgets points labeled by $S\setminus S'$.

\begin{prop}
For a  $\mathbb{Z}$-graded  vector space $V$
the cohomology of the factorization complex $\int_M \Bbbk[V]$ 
of the polynomial algebra $\Bbbk[V]$ is isomorphic to  $\Bbbk[V\otimes H_*(M)]$.
\label{commutative}
\end{prop}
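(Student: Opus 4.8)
The plan is to identify the factorization complex of $\Bbbk[V]$ with the classical Loday construction — the tensoring of a commutative algebra by the space $M$ — and then to exploit that $\Bbbk[V]=\mathrm{Sym}(V)$ is the \emph{free} commutative algebra on $V$. First I would record that $\Bbbk[V]$ is a commutative algebra, so that its $\fm{n}$-structure (the commutative one appearing in Proposition \ref{obl}) is pulled back from the commutative operad along the augmentation $\fm{n}\to\Comm$; for $n\ge 2$ the spaces $\FM(S)$ are connected, so $H_0(\FM(S))=\Bbbk$ and this augmentation is simply the operad map to chains on a point. Consequently, in the colimit (\ref{coend}) the downward arrow — the left action of $\fm{n}$ on $A^{\otimes S}$ — factors through the multiplication of $\Bbbk[V]$: only $H_0$ of the operad chains ever contributes, and the whole diagram depends on $C_*(\CS{M}{\bullet})$ only through its structure of a right module over $\Comm$.

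Next I would linearize by the polynomial weight. Since multiplication adds degrees, the grading $\Bbbk[V]=\bigoplus_m \mathrm{Sym}^m V$ splits the factorization complex into a direct sum of weight-$N$ subcomplexes, and it suffices to treat each separately. Using the reduced relations (\ref{reduced}) to forget points labelled by the unit, and the operad action to split a point carrying a weight-$d$ monomial into $d$ colliding points each labelled by a generator, I would present the weight-$N$ piece as the relative tensor product $C_*(\CS{M}{\bullet})\otimes_{\Comm}\mathrm{Sym}(V)$, i.e. a two-sided bar construction over $\Comm$. Because the inclusion $\Cs{M}{S}\hookrightarrow\CS{M}{S}$ is a homotopy equivalence and the bar construction is homotopy invariant, this relative tensor product depends on $M$ only through the chains $C_*(M)$, and in cohomology only through $H_*(M)$.

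The key structural input is then an adjunction chase. The functor $\mathrm{Sym}$ is left adjoint to the forgetful functor from commutative algebras to modules, while the tensoring $\mathrm{Sym}(V)\otimes M$ is characterized by mapping spaces out of $M$ into algebra mapping spaces; composing these adjunctions and using that tensoring a module by $M$ yields $V\otimes C_*(M)$, one obtains $\mathrm{Sym}(V)\otimes M\simeq \mathrm{Sym}\big(V\otimes C_*(M)\big)$. Passing to cohomology and invoking the K\"unneth isomorphism over the field $\Bbbk$ (so that $\mathrm{Sym}$ commutes with taking homology) gives $H^\bullet(\int_M\Bbbk[V])\cong \mathrm{Sym}(V\otimes H_*(M))=\Bbbk[V\otimes H_*(M)]$, as claimed. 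A convenient sanity check throughout is $M=\R{n}{}$, where $\CS{\R{n}{}}{S}$ contracts onto a single labelled point and one recovers $\int_{\R{n}{}}\Bbbk[V]=\Bbbk[V]=\Bbbk[V\otimes H_*(\R{n}{})]$.

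The step I expect to be the main obstacle is the second one: rigorously showing that the colimit of Definition \ref{colimit}, formed with the full chain operad $\fm{n}$, really computes the commutative tensoring and not the larger free $E_n$-type answer carried by the configuration spaces alone. This requires checking two things at once — that none of the higher operad chains survive once the coefficient is commutative, so that the bar construction is effectively taken over $\Comm$ rather than over $\fm{n}$, and that the construction is homotopy invariant enough that the compactified configuration spaces may be replaced by their homotopy types. Carrying this out at the level of complexes, with the correct $\Aut(S)$-equivariance and the Koszul signs coming from the $\mathbb{Z}$-grading of $V$, is where the genuine work lies.
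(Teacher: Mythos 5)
Your proposal is correct and takes essentially the same approach as the paper: the paper's proof simply asserts that for the polynomial algebra the factorization complex is ``by the very definition'' isomorphic to $\bigoplus_i C_*(M^{\times \set{i}})\otimes_{\Sigma_i} V^{\otimes \set{i}}$ --- the symmetric construction on chains of $M$ that you arrive at via commutativity and freeness of $\mathrm{Sym}(V)$ --- and the statement then follows by passing to homology. The step you flag as the main obstacle (that the colimit over $\fm{n}$ collapses to the commutative tensoring) is precisely what the paper treats as immediate, so your version just makes explicit what the paper leaves implicit.
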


\begin{proof}
The factorization complex of a polynomial algebra $\Bbbk[V]$ on a manifold $M$ is
isomorphic to $\bigoplus_i C_*(M^{\times \set{i}})\otimes_{\Sigma_i} V^{\otimes \set{i}}$
by the very definition.
It follows the statement.
\end{proof}

\begin{prop}
\begin{enumerate}
\item The factorization complex $\int_{M^k} A$
of an invariant $\fm{n}$-algebra 
on a closed compact oriented $k$-manifold $M^k$ is 
naturally equipped with a structure of  $\fm{n-k}$-algebra.
\item For a fiber bundle $E^n\stackrel{F^k}{\to} B^{n-k}$ with 
closed compact oriented base and fiber and an invariant  $\fm{n}$-algebra $A$
$$
\int_{B^{n-k}}(\int_{F^k} A)=\int_{E^n} A,
$$
where $\int_{F^k} A$ is a $\fm{n-k}$-algebra by the previous item.
\label{fubini}
\end{enumerate}
\end{prop}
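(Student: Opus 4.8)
The plan is to derive both statements from one geometric input: the Fulton--MacPherson compactification of the total space of a fiber bundle is stratified by the compactifications of the base and of the fibers, compatibly with the right $\FM$-module structure used in Definition \ref{colimit}. I treat part (1) as the special case of a trivial bundle with non-compact Euclidean fiber and part (2) as the general case.

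For part (1), I would first identify $\int_{M^k} A$ with $\int_{M^k\times\R{n-k}{}} A$. The projection $M^k\times\R{n-k}{}\to M^k$ induces, at each level of the diagram (\ref{coend}), a deformation retraction of $\CS{M^k\times\R{n-k}{}}{S}$ onto $\CS{M^k}{S}$ along the Euclidean directions, hence a quasi-isomorphism of factorization complexes. The $\fm{n-k}$-structure is then read off from the $\R{n-k}{}$-directions: a configuration of $|S|$ points in $\R{n-k}{}$ determines $|S|$ pairwise disjoint slabs $M^k\times\mathbb{D}^{n-k}$ inside $M^k\times\R{n-k}{}$, each canonically of the form $M^k\times\R{n-k}{}$, and the open embedding of their disjoint union into $M^k\times\R{n-k}{}$ yields, through the right $\FM$-module maps of (\ref{coend}), an operation $\bigl(\int_{M^k}A\bigr)^{\otimes S}\to\int_{M^k}A$. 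The space of such configurations together with their Fulton--MacPherson data is precisely an operation space of $\fm{n-k}$, so its chains act; since the embeddings are built from $SO(n-k)$-equivariant data in the Euclidean directions and $A$ is invariant, the resulting $\fm{n-k}$-algebra is again invariant, which is needed to feed it into $\int_{B^{n-k}}$ in part (2).

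For part (2), I would present both $\int_{E^n}A$ and $\int_{B^{n-k}}\int_{F^k}A$ as colimits of the shape (\ref{coend}) and construct a natural isomorphism from a map of the underlying diagrams. Over the open stratum, where the $S$ points of $E$ lie in distinct fibers, the bundle projection sends $\Cs{E}{S}$ to $\Cs{B}{S}$; as points collide into a common fiber one enters a stratum governed by $\CS{F}{\cdot}$, and matching this clustering stratification of $\CS{E}{S}$ with the two-step colimit computing $\int_{B}(\int_{F}A)$ produces the comparison. Over a trivializing chart $E|_U\cong U\times F$ this is the direct product computation used in part (1); the content is to glue these local identifications over a nontrivial bundle. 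Here invariance of $A$ (Definition \ref{inv}) is essential: the transition functions of the oriented bundle act on fiber configurations through $SO(k)\subset SO(n)$, and because all structure maps of $A$ are strictly $SO(n)$-invariant the locally defined maps agree on overlaps and glue to a global isomorphism of complexes.

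The main obstacle is exactly this passage from the trivial to a nontrivial bundle, i.e. establishing that $\CS{E}{\bullet}$ is assembled from $\CS{B}{\bullet}$ and $\CS{F}{\bullet}$ compatibly with the right $\FM$-module structures; the orientation hypotheses and the strict $SO(n)$-invariance of $A$ are what remove the need for a global framing and let the local product descriptions glue. A secondary point is to check that the operations produced in part (1) obey the $\fm{n-k}$-operad relations on the nose rather than merely up to homotopy, which again rests on the strictness built into Definition \ref{inv} and on the strict compatibility of the gluing maps in (\ref{coend}) and (\ref{reduced}).
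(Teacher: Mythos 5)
The paper offers no proof of its own here---its ``proof'' is the citation \cite[Section 5]{GTZ2} and references therein---and the strategy in that literature (push the factorization algebra forward along $M^k\times\R{n-k}{}\to\R{n-k}{}$, respectively along $E^n\to B^{n-k}$, and read off the $\fm{n-k}$-structure from local constancy of the pushforward) is essentially your plan. So the question is whether your sketch is sound, and it has a genuine gap at its very first step. You transport the slab-built $\fm{n-k}$-structure from $\int_{M^k\times\R{n-k}{}}A$ to $\int_{M^k}A$ via an identification justified by a ``deformation retraction of $\CS{M^k\times\R{n-k}{}}{S}$ onto $\CS{M^k}{S}$ along the Euclidean directions.'' No such retraction exists: the projection $M^k\times\R{n-k}{}\to M^k$ does not preserve distinctness of points, so it induces no map $\Cs{M^k\times\R{n-k}{}}{S}\to\Cs{M^k}{S}$, nor a map of Fulton--MacPherson compactifications (an infinitesimal configuration lying purely in the fiber directions would be collapsed entirely). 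Moreover, even a levelwise homotopy equivalence would not suffice, since the factorization complex is a colimit in which the upward and downward gluing arrows of (\ref{coend}) must be matched, not only the individual terms. The identification you want is true, but its honest proof is $\int_{M^k\times\R{n-k}{}}A\simeq\int_{\R{n-k}{}}(\int_{M^k}A)\simeq\int_{M^k}A$, where the first equivalence is item (2) for a trivial bundle (with non-compact base, hence already outside the statement being proved) and the second is Proposition \ref{odisk} applied to the $\fm{n-k}$-algebra $\int_{M^k}A$; both presuppose exactly the $\fm{n-k}$-structure on $\int_{M^k}A$ that item (1) is meant to construct. As structured, your argument for (1) is circular. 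The non-circular version builds the operations directly on the complex $\int_{M^k}A$ of Definition \ref{colimit}: a configuration of $|S|$ points in $\R{n-k}{}$ superposes $S$ given chains of $M^k$-configurations labelled by $A$, and collisions in $M^k$ between points belonging to different factors are resolved into $\fm{n}$-operations of $A$ by their Euclidean separations; it is here, and not through $\obl{n}{k}A$ alone, that the full $\fm{n}$-structure of $A$ enters, and this chain-level construction is what your slab picture should actually be formalizing.

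A secondary point: your appeal to invariance in part (2) is misdirected. The transition functions of an oriented bundle with fiber $F^k$ act on $\CS{F^k}{S}$ through orientation-preserving diffeomorphisms of $F^k$, not through $SO(k)\subset SO(n)$. What Definition \ref{inv} actually buys is that the structure maps are independent of choices of oriented frames of tangent spaces (this is the issue raised in the paper just before Definition \ref{colimit}); that independence makes the right module structure on $C_*(\CS{F^k}{\bullet})$, and hence $\int_{F^k}A$ with its $\fm{n-k}$-structure from item (1), functorial under orientation-preserving diffeomorphisms of the fiber, and it is this functoriality---not any rotation-valuedness of the cocycle---that lets the local product identifications glue over a non-trivial bundle. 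With that correction, and with part (1) repaired as above, your outline of part (2), matching the clustering stratification of $\CS{E^n}{S}$ against the two-step colimit, is the right skeleton, though as written it names the comparison map rather than constructs it.
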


\begin{proof}
See \cite[Section 5]{GTZ2} and references therein.
\end{proof}

This theorem may be formulated  for maps more
general than projections of fiber bundles.
To define push-forward in a more general situation
one needs to introduce factorization sheaves, see \cite{ATF, G} for details.
The construction from Subsection \ref{action-sec} below is an example
of such a push-forward.

\subsection{Factorization complex of a disk}

The factorization complex is homotopy invariant. In particular, it means,
that the factorization complex of a disk is trivial. It is stated
in two subsequent propositions.

Denote by $\Disk^n$ the open disk  $\{x\in\R{n}{}|\lvert x\rvert<1\}$
and by $\Diskc^n$ the closed disk  $\{x\in\R{n}{}|\lvert x\rvert\le 1\}$. 

\begin{prop}
For a $\fm{n}$-algebra $A$ the factorization  complex $\int_{\Disk^n} A$ is homotopy equivalent to 
$A$ and embedding of any point  $  p \to \Disk^n$  
induces a quasi-isomorphism $A=\int_p A\stackrel{\sim}{\to} \int_{\Disk^n} A$.
\label{odisk}
\end{prop}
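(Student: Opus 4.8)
The plan is to construct an explicit deformation retraction of the complex $\int_{\Disk^n}A$ onto the single-point summand $A=\int_p A$, the retraction being exactly the map $\phi$ induced by the open embedding of the center $p=\vec 0$ into the disk; this yields the homotopy equivalence, and a fortiori the quasi-isomorphism, in one stroke. Since any two points of $\Disk^n$ are interchanged by a diffeomorphism of the disk, and the factorization complex is functorial in such diffeomorphisms, it suffices to treat the center. The starting observation is that $\Disk^n$ is star-shaped about $\vec 0$, so the scaling flow $H(x,t)=(1-t)x$ contracts it to the center, with $H(\cdot,0)=\id$ and $H(\cdot,1)$ the constant map to $\vec 0$.

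The key steps are then as follows. First I would lift the flow to the Fulton--MacPherson compactifications: for each finite set $S$ the map $H$ induces a continuous homotopy $\CS{\Disk^n}{S}\times[0,1]\to\CS{\Disk^n}{S}$, and here the whole point of using $\CS{\Disk^n}{S}$ rather than $\Cs{\Disk^n}{S}$ is that the compactification records limiting relative positions, so that at $t=1$ — where all points collide at $\vec 0$ — the map is still continuous and lands in the boundary stratum of configurations concentrated at the center with relative datum in $\Cs{\R{n}{}}{S}/\Dil{n}$, i.e. a point of $\FM^S$. Passing to chains, the flow gives on each summand $C_*(\CS{\Disk^n}{S})\otimes_{Aut(S)}A^{\otimes S}$ a chain homotopy between the identity and the map carrying a chain to the corresponding chain of configurations concentrated at the center and decorated by the $\FM^S$-datum. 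By the right $\fm{n}$-action in (\ref{coend}) together with the left action of $\fm{n}$ on $A$, such a concentrated configuration with internal datum $\theta\in\FM^S$ and labels in $A^{\otimes S}$ is identified in the colimit with the single-point summand ($S=\set{1}$) carrying the value of the structure map $\fm{n}(S)\otimes A^{\otimes S}\to A$. Hence every generator of $\int_{\Disk^n}A$ is, up to the colimit relations and the homotopy above, represented in the image of $\phi$, and the higher summands contribute nothing.

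The main obstacle is to check that the scaling homotopy descends to the colimit: one must verify that it is compatible with \emph{both} arrows of (\ref{coend}) — the right action of $\fm{n}$ on $C_*(\CS{\Disk^n}{\bullet})$ and the insertion of the unit for the forgotten points, as in the reduced relations (\ref{reduced}) — and with the $Aut(S)$-actions, so that it induces a well-defined contracting homotopy on the coend rather than merely on each term separately. Compatibility across all $S$ holds because the homotopy is induced by a single diffeomorphism of $\Disk^n$, while the identification of its $t=1$ value with the operadic multiplication is precisely where the Fulton--MacPherson structure of the right module does the work; this intertwining is the delicate point. As a consistency check one may compare with Proposition \ref{commutative}: for $A=\Bbbk[V]$ the cohomology is $\Bbbk[V\otimes H_*(\Disk^n)]=\Bbbk[V]$ since $\Disk^n$ is contractible, in agreement with $\int_{\Disk^n}\Bbbk[V]\simeq\Bbbk[V]$. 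An alternative avoiding the explicit homotopy is to filter (\ref{coend}) by the cardinality of $S$ and run the resulting spectral sequence, but identifying its first page still rests on the same contractibility of $\CS{\Disk^n}{S}$, so I expect the homotopy-theoretic input to be unavoidable.
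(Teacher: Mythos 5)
Your proposal is correct and follows the paper's own route: the paper offers no argument beyond the citation to \cite[Prop.~5]{W}, and the proof there is exactly the scaling contraction you describe — the dilation flow lifted to the Fulton--MacPherson compactifications, whose $t=1$ value lands in the stratum of configurations concentrated at the origin and is then identified with the operadic product on $A$ via the colimit relations of (\ref{coend}). The point you flag as delicate (descent of the prism homotopy to the colimit) does hold, though for a sharper reason than being ``induced by a single diffeomorphism'' (for $t<1$ the maps $H(\cdot,t)$ are only open embeddings of $\Disk^n$ into itself, and $H(\cdot,1)$ is constant): the derivative of each $H(\cdot,t)$ is a positive multiple of the identity, hence lies in $\Dil{n}$ and acts trivially on Fulton--MacPherson data, so the scaling commutes with the right $\fm{n}$-module structure on $C_*(\CS{\Disk^n}{\bullet})$ and with insertion of operad chains, and therefore preserves the defining relations of the coend.
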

\begin{proof}
See e.~g. \cite[Prop. 5]{W}.
\end{proof}

\begin{prop}
For a invariant $\fm{n}$-algebra $A$ the factorization  complex $\int_{\Diskc^n} A$ is homotopy equivalent to 
$A$ and the embedding  $ \Disk^n \to \Diskc^n$  
induces a quasi-isomorphism $\int_{\Disk^n} A\stackrel{\sim}{\to} \int_{\Diskc^n} A$.
\label{module}
\end{prop}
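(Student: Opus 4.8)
The plan is to deduce the statement from Proposition \ref{odisk} by comparing the closed disk to the open disk through a radial scaling homotopy. First I would observe that, since $\Diskc^n$ is oriented and $A$ is invariant, the factorization complex $\int_{\Diskc^n} A$ is well-defined: invariance of $A$ removes the need for a framing near the boundary stratum, so the right $\fm{n}$-module structure on the chains $C_*(\CS{\Diskc^n}{\bullet})$ entering the colimit (\ref{coend}) is defined using only the orientation. This is precisely the point at which the hypothesis of invariance is used, and it explains why the present proposition, unlike Proposition \ref{odisk}, requires it.

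Next I would show that the inclusion $\Disk^n\hookrightarrow\Diskc^n$ induces, for each finite set $S$, a homotopy equivalence $\CS{\Disk^n}{S}\to\CS{\Diskc^n}{S}$, and that these assemble into a map of right $\fm{n}$-modules. For the homotopy equivalence I would use the radial dilation $\phi_s(x)=sx$, $s\in(0,1]$. For $s<1$ the map $\phi_s$ sends $\Diskc^n$ into $\Disk^n$, hence induces a map $\CS{\Diskc^n}{S}\to\CS{\Disk^n}{S}$ (it lifts to the Fulton--MacPherson compactification since $\phi_s\in\Dil{n}$), while the family $\{\phi_s\}_{s\in[1/2,1]}$ interpolates between the identity and $\phi_{1/2}$ on both configuration spaces. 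Taking $\phi_{1/2}$ as a candidate homotopy inverse to the inclusion, both composites are dilations homotopic to the identity, so the inclusion is a levelwise homotopy equivalence.

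The key structural point is that $\phi_s$ is a dilation, and therefore commutes strictly with the right $\fm{n}$-action: inserting a (dilation-invariant) element of $\FM$ at a configuration point and then scaling the ambient configuration agrees with scaling first and inserting afterwards. Consequently the scaling homotopy is a homotopy through maps of right $\fm{n}$-modules, and the inclusion is a chain-level homotopy equivalence of right $\fm{n}$-modules. A levelwise quasi-isomorphism of right $\fm{n}$-modules induces a quasi-isomorphism on the relative tensor product computing the factorization complex, so
$$\int_{\Disk^n} A\longrightarrow\int_{\Diskc^n} A$$
is a quasi-isomorphism. Composing with the quasi-isomorphism $A=\int_p A\stackrel{\sim}{\to}\int_{\Disk^n} A$ of Proposition \ref{odisk} yields $\int_{\Diskc^n} A\simeq A$, as claimed.

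The main obstacle I anticipate is the final homological step: justifying that a levelwise quasi-isomorphism of right $\fm{n}$-modules descends to a quasi-isomorphism on the colimit (\ref{coend}). This requires the chain-level configuration modules to be sufficiently cofibrant, so that the coend computes a derived tensor product and is thereby invariant under levelwise quasi-isomorphisms. Verifying this cofibrancy, together with checking that the scaling maps genuinely respect the Fulton--MacPherson module structure along the boundary stratum of $\Diskc^n$, is the technical heart of the argument; the remainder is the formal scaling homotopy above.
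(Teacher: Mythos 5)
Your route is genuinely different from the paper's, so let me first record the contrast. The paper's proof (taken from \cite[5.2]{GTZ}) never compares the configuration spaces of $\Disk^n$ and $\Diskc^n$ directly: it fibers $\Diskc^n$ over $[0,1]$ by the radius, uses that the underlying complex of $A$ is a module over the $e_1$-algebra $\int_{S^{n-1}}A$, and invokes the gluing property of the factorization complex to identify $\int_{\Diskc^n}A$ with $A\otimes^L_{\int_{S^{n-1}}A}\int_{S^{n-1}}A\simeq A$. This is also where the invariance hypothesis really enters: one must form $\int_{S^{n-1}}A$, and the sphere is oriented but not canonically framed; your explanation of invariance via framings near the boundary stratum is not the operative one. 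Your proposal instead tries to prove the statement by a bare-hands homotopy-invariance argument, comparing the two configuration-space right modules through the radial dilations $\phi_s$.

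As written, however, your proof has a genuine gap, and you have located it yourself: the assertion that a levelwise quasi-isomorphism of right $\fm{n}$-modules induces a quasi-isomorphism of factorization complexes is false in general for the \emph{strict} colimit (\ref{coend}) of Definition \ref{colimit}, and the cofibrancy of the chain-level configuration modules that you appeal to is left unverified. This is not a deferrable technicality: the claim that the strict coend computes a derived coend is essentially the homotopy invariance of the factorization complex, i.e.\ the very kind of statement (Propositions \ref{odisk} and \ref{module}) being proved, so the proposal concentrates all the content of the proposition into an unproven lemma. The good news is that your own key observation repairs this without any cofibrancy input. Since each $\phi_s$ is a dilation, it commutes \emph{strictly} with insertion of elements of $\FM$ (including at boundary points of $\Diskc^n$, where the collision strata are the same full $\FM$-fibers as in the interior). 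Hence the explicit chain homotopy $h(c)=\Phi_*(c\times[1/2,1])$, given by cross product with the fundamental chain of the interval followed by pushforward along the scaling homotopy $\Phi$, commutes up to sign with the right $\fm{n}$-action, and it trivially commutes with the downward arrow of (\ref{coend}), which only touches the $A$-factors. Therefore $h$ descends to the colimit and exhibits the two composites of the inclusion and of $\phi_{1/2}$ as chain homotopic to the identity on the factorization complexes themselves. Run this way, your argument yields a chain homotopy equivalence $\int_{\Disk^n}A\simeq\int_{\Diskc^n}A$ with no appeal to derived invariance, and Proposition \ref{odisk} then finishes the proof exactly as you say; I recommend replacing your final cofibrancy paragraph by this explicit descent of the homotopy.
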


\begin{proof}
The following proof is taken from \cite[5.2]{GTZ}.
Consider the projection $p\colon \Diskc^n\to [0,1]$, which sends point $x$
to $|x|$. The fiber over a non-zero point is the sphere $S^{n-1}$.
The factorization complex of $\int_{S^{n-1}} A$ is a $e_1$-algebra
and $A$ as a complex is a module  over it 
(see \cite{L,F} and also \cite[Proposition 5.8]{GTZ}).
As it follows from gluing property of the factorization complex,  
the factorization complex  $\int_{\Diskc^n} A$
is quasi-isomorphic to 
$$
A\mathop{\otimes}\limits_{\int_{S^{n-1}} A}^L \int_{S^{n-1}} A,
$$
which follows the statement of the proposition.
\end{proof}

From the proof of this proposition it follows that
the factorization complex $\int_{\Diskc^n} A$
is equipped with a structure of ($\int_{S^{n-1}} A$)-module.
As this complex is quasi-isomorphic to $A$
it follows that the underlying complex of $A$ itself is a $\int_{S^{n-1}} A$-module
(\cite[Lemma 5.12]{GTZ}).

\section{Swiss Cheese operad}

\subsection{Swiss Cheese operad}
\label{SC}

Let $\R{n}{}$ be an affine space. 
Denote by $\R{n}{\ge 0}$ and $\R{n}{>0}$
subsets $\{\overrightarrow{x}\in \R{n}{}| x_0\ge 0\}$
and  $\{\overrightarrow{x}\in \R{n}{}| x_0> 0\}$ correspondingly,
where  $x_0$ is the coordinate function.
Denote by $\Scs{\R{n}{\ge 0}}{S}$
the configuration space 
 of distinct ordered points in $\R{n}{\ge 0}$ labeled by $S$. 
Points inside $\R{n}{>0}\subset \R{n}{\ge 0}$ are called closed and points on the boundary
 $\R{n-1}{=0}\subset \R{n}{\ge 0}$
are called open. 
Denote by $\ScS{\R{n}{\ge 0}}{S}$  the closure of $\Scs{\R{n}{\ge 0}}{S}$ in   $\CS{\R{n}{}}{S}$.
This is a manifold with corners and a boundary.
There is a projection $\pi\colon \ScS{\R{n}{\ge 0}}{S}\to (\R{n}{\ge 0})^S$,
which restricts on  $\Scs{\R{n}{\ge 0}}{S}$ to  the natural embedding.

Let us define a stratification of $\Scs{\R{n}{\ge 0}}{S}$
that is a continuous map to a poset.
The poset is ${\{O<C\}}^S$,
where  $\{O<C\}$ is the poset consisting of two elements.
For $s\in S$ the $s$-component of this map is $C$,
if the point of the configuration labeled by $s$ is closed
and is $O$ if it is open.
One may see that taking closures of strata in
$\ScS{\R{n}{\ge 0}}{S}$ defines a Whitney stratification of the latter
space with the same indexing poset.
Denote the indexing map by
\begin{equation}
\varpi\colon \ScS{\R{n}{\ge 0}}{S}\to {\{O<C\}}^S.
\label{strat}
\end{equation}

Denote by $\Dil{n-1}$ the subgroup of affine transformations
of $\R{n}{}$ consisting of dilatations  with positive coefficients and shifts
along the hyperplane $\{x_0=0\}$. 
Group $\Dil{n-1}$ acts freely on $\ScS{\R{n}{\ge 0}}{S}$.
The quotient is isomorphic to
the fiber $\pi^{-1}(\vec{0})$, where $\vec{0}\in(\R{n}{\ge 0})^S$ is the $S$-tuple
sitting at the origin.
It follows that $\pi^{-1}(\vec{0})$ is a retract of $\ScS{\R{n}{\ge 0}}{S}$.
Denote the quotient by $\SCO^{S}$.
Note that  $\pi^{-1}(\vec{x})$ for any $S$-tuple $\vec{x}\in(\R{n}{> 0})^S$ 
in the interior
is isomorphic to $\FM^S$.
%

The sequence of manifolds with corners $\SCO^{S}$ form a colored operad called
the Swiss Cheese operad introduced in \cite{V,KO}.
Describe it as an operad of sets.
This colored operad
has two colors: points may be open and closed.
Note that the set of colors is a poset, that is a category, rather than a set, and there are only
operations compatible with this structure.
This operad of sets is free
and is generated by 
the following operations.
The set of $S$-ary generating operations from $S$ closed points to a close point
equals to the quotient of $\Cs{\R{n}{}}{S}\hookrightarrow\CS{\R{n}{}}{S}$
by $\Dil{n}$, which is embedded in $\FM^S$.
The set of  operations from $C$ closed and $O$ open points to an open point
equals to the quotient of
the configuration space of $C$ distinct points in $\R{n}{>0}$
and $O$ distinct points in $\R{n-1}{=0}$
factored out by the $\Dil{n-1}$ group action.
The action of the symmetric group is straightforward
and the composition is analogous
to the one of the Fulton--MacPherson operad.  

Below we do not need exactly the notion of this colored operad, 
but the action of a $\fm{n}$-algebra on a $\fm{m}$-algebra
 we define below is essentially the
action of this operad.

\subsection{Action}

For a space $X$  with a stratification given by $\varpi \colon X\to P$, where $P$ is
a posetal category, we say that a constructible sheaf with values in a category $\mathrm{C}$ is $\varpi$-combinatorial 
if its restriction to each stratum is constant.
A combinatorial sheaf is defined by a functor $P\to \mathrm{C}$, see e.~g. \cite[1.5]{GK}.

Consider a triple $(A, M, \varepsilon)$ consisting of a unital $\fm{n}$-algebra $A$,
a  $\fm{n-1}$-algebra $M$ and a map of unital $\fm{n-1}$-algebras 
$\varepsilon\colon \obl{n}{n-1} A\to M$.
Denote by $A^{\vee}$ and $M^{\vee}$ the linear dual complexes.
The triple defines a functor  from category $\{O<C\}$ 
to complexes, which sends $C$ to $A^{\vee}$, $O$ to $M^{\vee}$ and $\varepsilon^{\vee}$ to 
the only non-trivial morphism of this category.
The tensor power of this functor gives a functor from
 ${\{O<C\}}^S$  to complexes.
Denote by $\Sh{\varepsilon^{\vee}}$ the combinatorial
sheaf of complexes over $\SC^\bullet$ associated with this functor.

\begin{definition}[{\cite{V, KO, T}}]
\label{sc}
For a triple $(A, M, \varepsilon)$ as above, an action of $A$ on $M$
is defined by 
maps of complexes 
$$
M^{\vee}\to C^*(\SCO^S, \,\Sh{\varepsilon^{\vee}})
$$
such that
\begin{enumerate}
\item(compatibility) their restriction on $\R{n-1}{=0}\subset \R{n}{\ge 0}$
are given by the $\fm{n-1}$-algebra structure on $M$;
\item(factorization) they  factor through the limit of the diagram
\begin{equation*}
\begin{tikzcd}
\bigoplus\limits_{S'} C^*(\CS{\R{n}{\ge 0}}{S'}, \,\Sh{\varepsilon^{\vee}})  \arrow[d]\\
\bigoplus\limits_{i\colon S'\to (C\cup O)}C^*(\Cs{\R{n}{\ge 0}}{C\cup O})
\!\!\!\tens{ Aut(C)\times Aut(O)}\!\!\!
{{\bigotimes\limits_{s\in C} (\fm{n}(i^{-1}s)\tens{Aut(i^{-1}s)} A^{\otimes {(i^{-1}s)}})^{\vee}}
\atop{\bigotimes\limits_{s\in O}{C^*(\SCO^{{i^{-1}} s},\, \Sh{\varepsilon^\vee} )}}} \\
\bigoplus\limits_{C\cup O}C^*(\Cs{\R{n}{\ge 0}}{C\cup O})\tens{Aut(C)\times Aut(O)} ({A}^{\otimes C} \otimes {M}^{\otimes O} )^\vee\arrow[u]
\end{tikzcd}
\end{equation*}
where 
$\Cs{\R{n}{\ge 0}}{C\cup O}$ means the configuration space of
$C$ closed and $O$ open distinct points,
summation in the middle runs over maps between finite sets,
which are surjective on $O$,
the  upwards arrow is given by the left coaction of $\fm{n}$ on $A$ for
$\Im i\cap C$,
action of $A$ on $M$ for $\Im i\cap O$
 and the unit for $S\setminus \Im i$,
and the  downwards arrow is given by the right coaction of $\fm{n}$ on 
$C^*(\CS{\R{n}{\ge 0}}{\bullet})$.
\end{enumerate}
\label{def-action}
\end{definition}

This definition resembles Definition \ref{colimit} of the  factorization complex.
It is not a coincidence, the action may be defined as a factorization
sheaf of a special form, see for details \cite{ATF}, \cite{G}.

\subsection{Higher Hochschild cohomology}
\label{HHC}

Let $M$ be an invariant $\fm{n-1}$-algebra. 
The factorization complex $\int_{S^{n-2}} M$
is an $e_1$-algebra and
the underlying complex
of $M$ is a module over it, see e.~g. \cite{GTZ}
and the remark after Proposition \ref{module}.

\begin{definition}[\cite{F}, \cite{GTZ}]
Define the higher Hochschild cohomological complex of an invariant $\fm{n-1}$-algebra $M$
by 
\begin{equation}
\HH{n-1}{M}=\RHom{\int_{S^{n-2}}M}{M}{M}.
\label{HH}
\end{equation}
\end{definition}

The higher Hochschild complex of an $e_{n-1}$-algebra
is equipped with an $e_{n-1}$-algebra structure.
It may be defined rather explicitly (see \cite{GTZ}):
it is given by the composition of the target of $RHom$.
By \cite{L} and \cite{GTZ} the higher Hochschild cohomology is the
derived centralizer of the identity map 
of a $e_{n-1}$-algebra to itself. By \cite{L} it is equipped
with a canonical  $e_{n}$-algebra structure.
It is shown there that the mentioned $e_{n-1}$ structure on $\HH{n-1}{M}$
may be lifted to a $e_n$-algebra structure.

Action in the sense of Definition \ref{def-action} of an $e_{n}$-algebra  $A$ on 
a $e_{n-1}$-algebra $M$ induces  a morphism of $e_{n}$-algebras 
$$
A\to \HH{n}{A},
$$
see \cite{T}. In terms of the triple $(A, M, \varepsilon)$ it may be defined as follows.
Given a chain in  the complex $ \int_{\Diskc^{n-1}} M$
consider the following chain in the complex dual to $C^*(\SCO^S, \Sh{\varepsilon^{\vee}})$:
its open points are given by this chain,  where $ \Diskc^{n-1}$ is the unit disc
in $\R{n-1}{=0}$ and the only closed point is  $(t, 0, \dots, 0)$
labeled by an element of $A$, where $t\in \R{}{>0}$. Consider the limit of this
configuration as $t$ approaches $0$.
Convolution with the action gives a map
$$
A \to \Hom( \int_{\Diskc^{n-1}} M, M)\backsimeq \Hom\nolimits^\bullet(M,M).
$$ 
One may see, that the resulting element of $ \Hom^\bullet(M,M)$
is a homomorphism of $(\int_{S^{n-2}} M)$-modules.
It gives us  a map
\begin{equation}
A \to \HH{n-1}{M}.
\label{action} 
\end{equation}
  
\begin{prop}
The map (\ref{action}) is a morphism of $e_n$-algebras.
The $e_n$-algebra $\HH{n-1}{M}$ is the final object
in the category of $e_n$-algebras acting on $M$.
\label{thomas}
\end{prop}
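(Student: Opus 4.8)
The plan is to deduce both assertions from the universal property of the derived centralizer established in \cite{L}, since by definition $\HH{n-1}{M}=\RHom{\int_{S^{n-2}}M}{M}{M}$ is precisely the centralizer of $\id_M$ in the $e_{n-1}$-sense, carrying its canonical $e_n$-structure. The first step is to reinterpret an action in the sense of Definition \ref{def-action} as a \emph{central action} of the $e_n$-algebra $A$ on the $e_{n-1}$-algebra $M$. The compatibility axiom says that the restriction to the boundary hyperplane $\R{n-1}{=0}$ recovers the $e_{n-1}$-multiplication on $M$, while the factorization axiom encodes that the closed (bulk) insertions of $A$ assemble into an $e_n$-action compatible with the module structure of $M$ over $\int_{S^{n-2}}M$ recalled after Proposition \ref{module}. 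Unwinding the limit $t\to 0$ in the construction preceding the statement, one sees that the convolution lands in the subcomplex of $(\int_{S^{n-2}}M)$-linear endomorphisms, which is exactly $\HH{n-1}{M}$; this already makes (\ref{action}) well defined.

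Second, I would verify that (\ref{action}) respects the $e_n$-operations. Both $e_n$-structures are governed by configurations of the Fulton--MacPherson operad: on $A$ by the given $\fm{n}$-algebra structure, and on $\HH{n-1}{M}$ by the composition in the target of $RHom$ together with the extra ``vertical'' direction produced by the centralizer. The key geometric observation is that pushing a configuration of closed points in $\R{n}{>0}$ toward the boundary degenerates the Swiss Cheese operations into the composition defining the $e_n$-structure on the centralizer; this is exactly what the stratification $\varpi$ of (\ref{strat}) and the factorization axiom record. Hence the square comparing an $e_n$-operation on $A$ with its image commutes up to the coherences built into Definition \ref{def-action}, which is what it means for (\ref{action}) to be an $e_n$-morphism.

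For the finality assertion, given any $e_n$-algebra $B$ equipped with an action on $M$ in the sense of Definition \ref{def-action}, the same construction produces an $e_n$-morphism $B\to\HH{n-1}{M}$. The universal property of the centralizer from \cite{L} then shows that this is the \emph{unique} such morphism through which the action of $B$ factors, because a central $e_n$-action of $B$ on $M$ is the same datum as an $e_n$-map into the centralizer of $\id_M$. Thus $\HH{n-1}{M}$, together with its tautological action, is the final object in the category of $e_n$-algebras acting on $M$.

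The main obstacle I anticipate is the faithful translation between the geometric notion of action (Definition \ref{def-action}), phrased via the constructible sheaf on $\SCO^S$ and the limit diagram there, and Lurie's notion of central action used to characterize the centralizer. In particular one must check that the $e_n$-structure obtained by degenerating half-space configurations agrees, up to coherent homotopy, with the abstract $e_n$-structure on $\RHom{\int_{S^{n-2}}M}{M}{M}$, and that finality holds at the level of mapping spaces rather than merely on homotopy classes. This coherence bookkeeping, rather than any single computation, is where the real work lies, and it is essentially the content of \cite{T}.
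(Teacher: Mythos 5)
Your proposal is correct and takes essentially the same route as the paper: the paper's entire proof is the citation ``This is the main result of \cite{T}'', and your sketch---translating Definition \ref{def-action} into a central action on $M$ and invoking the universal property of the derived centralizer from \cite{L}---is precisely an unwinding of what that cited theorem says. Since you yourself defer the coherence between the Swiss-cheese geometry and the abstract centralizer structure to \cite{T}, both arguments ultimately rest on the same external result.
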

\begin{proof}
This is the main result of \cite{T}.
\end{proof}

Being defined as in (\ref{HH}),
the higher Hochschild cohomological complex is not equipped with
an explicit $e_n$-algebra structure
(whereas the $e_{n-1}$-algebra structure can be made explicit,
see \cite{GTZ}).
In particular, the $L_\infty$-structure 
on $\LL(\HH{n-1}{M})$ is rather implicit.

But for $n=2$ the higher Hochschild cohomological complex
is the usual Hoch\-schild cohomological complex
and it is 
equipped with a Lie bracket due to Gerstenhaber \cite{Ger}.
If a $\fm{2}$-algebra  $A$ acts on  an  algebra $M$,
one may build an explicit $L_\infty$-morphism
from $\LL(A)$ to the  Hochschild cohomological complex
of $M$ equipped with the Gerstenhaber bracket.
Note that the latter is a dg-Lie algebra, it has
no higher $L_\infty$-operations.
It would be interesting to generalize this construction 
for higher dimensions.

Let $A$ be a $\fm{2}$-algebra  acting on a $\fm{1}$-algebra $M$.
Define a chain  in the complex dual to $C^*(\SCO^S, \Sh{\varepsilon^{\vee}})$ depending on $k$ elements of $A$ and $l$ elements of $M$.
Let $B_2=\{x\in\R{2}{>0}| |x|<1\}$ and $B_1=\{x\in\R{1}{=0}| |x|<1\}$.
Define  chain $\tilde{c}$ by
\begin{equation*}
\begin{split}
&\tilde {c}(a_1,\dots, a_k;m_1,\dots ,m_l)=\\
&\qquad \qquad [\Cs{B^2}{\set{k}}]\otimes_{\Sigma_k} (a_1\otimes\cdots a_k)
\cup  [\Cs{B^1}{\set{l}}]\otimes_{\Sigma_l} (m_1\otimes\cdots m_l),
\end{split}
\end{equation*}
where  $[\Cs{B^2}{\set{k}}]]$ and $ [\Cs{B^1}{\set{l}}]$ are cycles in
$C_*(\CS{\R{2}{\ge 0}}{S})$ presented by the configuration space of distinct points
lying in  $B_2$ and $B_1$.
Consider the fiberwise closure of this chain with respect
to the projection of configuration spaces, which forgets closed points.
Take its intersection with the subset consisting of configurations
with all closed points lying over the origin. 
Denote the resulting chain by $ c(a_1,\dots, a_k;m_1,\dots ,m_l)$.
The convolution of this chain with action
defines a map from $A^{\otimes k}\otimes M^{\otimes l}$
to $M$, which is symmetric in $A$'s.
Thus it defines a map
\begin{equation}
c\colon A^{\otimes k}[l+2k-2] \to \bigoplus\nolimits_l\Hom(M^{\otimes l }, M).
\label{q-lie}
\end{equation}

\begin{prop}
For  a $\fm{2}$-algebra  $A$ acting on a $\fm{1}$-algebra $M$ 
map (\ref{q-lie}) defines a $L_\infty$-morphism
from $\LL(A)$ to the Hochschild cohomological complex of $M$.
\label{kont}
\end{prop}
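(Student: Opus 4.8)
The plan is to derive the $L_\infty$-morphism equations for $c$ from a Stokes-type boundary analysis of the chains that define it, in the spirit of the configuration-space proofs of formality. Since the target is the Hochschild complex of $M$ with the Gerstenhaber bracket, which by \cite{Ger} is a genuine dg-Lie algebra (no higher brackets), the relation to be checked takes the concrete shape
\begin{multline*}
d_{\mathrm{Hoch}}\, c_k(a_1,\dots,a_k)
\pm \sum_{i} \pm\, c_k(a_1,\dots,d_A a_i,\dots,a_k)\\
= \tfrac12\sum_{p+q=k}\ \sum_{\sigma\in\mathrm{Sh}(p,q)}\pm\,\bigl[\,c_p(a_{\sigma(1)},\dots,a_{\sigma(p)}),\,c_q(a_{\sigma(p+1)},\dots,a_{\sigma(k)})\,\bigr]\\
+\sum_{j\ge 2}\ \sum_{\tau}\pm\,c_{k-j+1}\bigl(l_j(a_{\tau(1)},\dots,a_{\tau(j)}),a_{\tau(j+1)},\dots,a_{\tau(k)}\bigr),
\end{multline*}
where $c_k(a_1,\dots,a_k)=c(a_1,\dots,a_k;-)\in\bigoplus_l\Hom(M^{\otimes l},M)$, the $l_j$ are the higher brackets of the $\Li$-algebra $\LL(A)$, $d_A$ is its differential, and $[\,\cdot\,,\cdot\,]$ is the Gerstenhaber bracket. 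First I would write these relations out and pin down all sign and degree conventions, observing that the shift $[l+2k-2]$ in (\ref{q-lie}) is exactly the one that renders every term homogeneous of a single degree, so that the equation is internally consistent.

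The key structural input is that the defining chain $c(a_1,\dots,a_k;m_1,\dots,m_l)$ lives on (the closure in) the Swiss cheese configuration space $\ScS{\R{2}{\ge 0}}{S}$, and that the action of Definition \ref{def-action} is by chain maps, so that convolution with it commutes with differentials. The $L_\infty$-equation thereby becomes equivalent to a single chain-level identity: the codimension-one boundary of the defining chain decomposes into the families of strata listed below, and each family, after convolution with the action, yields exactly one group of terms in the displayed equation, the internal differentials of $A$ and $M$ accounting for the $d_A$ and $d_M$ contributions. I would make this reduction precise and then analyze the boundary stratum by stratum.

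I would then stratify $\partial$ according to the Fulton--MacPherson/Swiss-cheese degeneration type. Three families of codimension-one faces arise. First, faces in which a subset of the closed points (which sit over the origin) collides into an infinitesimal sub-configuration: by Proposition \ref{lie} these sub-configurations are precisely the tree-chains $[\mu^{-1}(t)]$ cutting out the morphism (\ref{morphism}) $\Li[-1]\to\fm{2}$, so summing over them reproduces the higher-bracket terms $c_{k-j+1}(l_j(\dots),\dots)$. Second, faces in which open points collide along the boundary line $\R{1}{=0}$ or run into the endpoints of $B_1$: by the compatibility axiom (Definition \ref{def-action}, item (1)) these are governed by the associative $\fm{1}$-structure and the bimodule actions on $M$, and they assemble into the Hochschild differential, producing the $d_{\mathrm{Hoch}}\,c_k$ terms. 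Third, faces in which the whole configuration factors into two Swiss-cheese pieces, a sub-cluster of closed points peeling off together with a block of open points: by the factorization axiom (Definition \ref{def-action}, item (2)) these reproduce the quadratic term, the Gerstenhaber bracket being exactly the insertion (brace) operation read off from such a splitting.

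The hard part will be the bookkeeping that upgrades this matching to an equality, and in particular a \emph{no-anomalous-faces} argument: I must verify that every codimension-one stratum not of the three listed types is either degenerate as a chain (supported in strictly smaller dimension, hence zero) or cancels in pairs, and that the three surviving families carry exactly the orientations prescribed by the signs above. This is the chain-level counterpart of the vanishing-of-hidden-faces phenomenon in the Kontsevich and Axelrod--Singer arguments. The most delicate point is the third family: one must check that the degeneration in which the closed cluster splits into two groups, each carrying a consecutive block of open points, reproduces on the nose the pre-Lie (brace) composition underlying the Gerstenhaber bracket, compatibly with the symmetrization in the $A$-arguments already built into $c$.
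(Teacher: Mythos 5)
Your proposal is, in substance, the paper's own proof: the paper disposes of Proposition \ref{kont} by citing Kontsevich's Theorem in \cite[6.4]{K} (``slightly rephrased''), and the proof of that theorem is exactly the Stokes-type codimension-one boundary-stratum analysis you outline --- interior collisions of closed points yielding the $\Li$-operations of $\LL(A)$ via Proposition \ref{lie}, collisions along the line yielding the Hochschild differential, and mixed clusters collapsing to a boundary point yielding the Gerstenhaber/brace term via the factorization axiom of Definition \ref{def-action}. Your chain-level formulation through the action axioms is precisely the rephrasing the paper alludes to, so the two approaches coincide.
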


\begin{proof}
This is the Theorem from \cite[6.4]{K} slightly rephrased.
\end{proof}

Note that the logic of this construction is similar to Proposition 8 from \cite{W}.

\section{Propagator approach}

\subsection{Propagator}
\label{Prop}

Consider the stratified space $\SCO^{2}$ defined in Subsection \ref{SC}.
One may see, that this is the higher dimensional
generalization of the "eye" from \cite{K}.
The stratum with two closed points is the 
 interior of the "eye",
two strata  with one open and one closed point
 are "eyelids", which are hemispheres, the stratum with two open points 
is the "eye corner(s)", which is a $(n-2)$-dimensional sphere.

The manifold with corners  $\SCO^{2}$
has two connected components of the boundary. The first one consists of two
"eyelids" glued by the "eye corner(s)".
The second one is the "iris", the $(n-1)$-dimensional sphere, which magnifies
collisions of two closed points.

The following definition is a straightforward
high-dimensional generalization  of the differential of the angle map from \cite[6.2]{K}. 

\begin{definition}
A $n$-propagator is a smooth closed differential $(n-1)$-form on $\SCO^{2}$,
such that 
\begin{enumerate}
\item its restriction on the "lower eyelid" is zero and
\item its restriction on the "iris" is the standard volume form on the sphere.
\end{enumerate}
\label{propagator}
\end{definition}

Consider some examples of propagators. We define a differential form on the interior
of the "eye" and leave to the reader to check that it continues
to the boundary. The interior consists of pairs of distinct points in $\R{n}{>0}$
modulo the  $\Dil{n-1}$ action, that is $\Scs{\R{n}{> 0}}{\set{2}}/\Dil{n-1}$.
Denote such a pair by $(s,t)$. When $t$ tends to the boundary $\R{n-1}{=0}\subset\R{n}{\ge 0}$,
the pair $(s,t)$ tends to the "lower eyelid". 

\setcounter{example}{-1}

\begin{example}
The first example is a high-dimensional generalization of the propagator used in \cite{K}.

For a pair $(s,t)$ as above denote by $\overline{t}\in \R{n}{<0}$ the image
of the reflection of $t$ with respect to the boundary hyperplane.  
Consider two maps from $\Scs{\R{n}{> 0}}{\set{2}}$ to $S^{n-1}$
which send $(s,t)$  to directions given by vectors $s-t$ and $s-\overline{t}$.
The difference between pullbacks of the standard volume form on the sphere
under the first and the second maps is a $\Dil{n-1}$-invariant
closed $(n-1)$-form  on $\Scs{\R{n}{> 0}}{\set{2}}$. Its continuous extension to the boundary satisfies 
conditions of the Definition \ref{propagator}. Denote this propagator by
$\Pro_n^0$.
\end{example}

\renewcommand*{\theexample}{\alph{example}}
\setcounter{example}{10}
\begin{example}
\label{ex}
The previous example is  the first in a series.
 
For $k\in \mathbb{N}$ consider the embedding $\R{n}{}\hookrightarrow\R{n+k}{}$ as the coordinate plane. 
For any point, $t\in \R{n}{>0}\subset \R{n}{}$ denote by $S_t$ the only $k$-sphere in $\R{n+k}{}$,
which contains $t$, has its center on the plane $\R{n-1}{=0}\subset\R{n}{}$ and 
lies in the plane perpendicular to this plane. 
Consider the space of triples $(s, t, p)$, where $s, t \in \R{n}{>0}\subset \R{n}{}$, $s\neq t$ and
$p\in S_t$.
Denote by $\varpi$ the projection from this space to the configuration space of two distinct ordered points
$\Cs{\Hp}{\set{2}}$, which forgets the third term of the triple. 

On the configuration space $\Cs{\R{n+k}{}}{\set{2}}$ 
of two distinct points of $\R{n+k}{}$
consider the standard differential $(n+k-1)$-form $\mathfrak{v}$,
which is the pullback of the standard volume form of the $(n+k-1)$-sphere
under the projection given by the direction of the vector connecting two points.
Denote by the same letter the $(n+k-1)$-form on the space of triples as above,
which is the pullback of $\mathfrak{v}$ under the map which forgets the middle term of the triple.
Denote by $\Pro_n^k$ the differential $(n-1)$-form on  $\Cs{\Hp}{\set{2}}$
given by the integration of $\mathfrak{v}$ along the projection $\varpi$:
\begin{equation}
\Pro_n^k(s,t)=\int_\varpi \mathfrak{v}(s,p).
\label{pr-formula}
\end{equation}

One may see, that this form is closed,  invariant under $\Dil{n-1}$ 
and allows the smooth extension to 
$\SCO^{2}$, which obeys conditions of Definition \ref{propagator},
that is it is a propagator.

In this example one may replace the sphere with 
any figure in the Euclidean space, which does not contain the origin 
and once intersects any ray from the origin. For example, one may take
an ellipsoid.  If one consider a family of ellipsoids in $\R{n+k}{}$, or any other figures,
which tends to the cylinder over $S^l$, the corresponding propagator tends
to $\Pro_n^{l}$.
\end{example}

\subsection{Action from a propagator}

In subsection \ref{weyl} following \cite{W} we construct the Weyl algebra 
over the operad  $\fm{n}$.
Given a propagator one may, in the same manner, construct an action of
this  $\fm{n}$-algebra on the polynomial algebra considered as a  $\fm{n-1}$-algebra. 

Let $U$ be a $\mathbb{Z}$-graded finite-dimensional vector space over the base field $\Bbbk$
of characteristic zero containing $\mathbb R$ and $U^{\vee}$ be the dual space.
Then $V=U\oplus U^{\vee}[1-n]$ is naturally equipped with the perfect skew-symmetric
pairing of degree $1-n$. By Definition \ref{wna} this data gives us
the Weyl $\fm{n}$-algebra $\We{n}{U\oplus U^{\vee}[1-n]}$ with the underlying complex 
$\Bbbk[U\oplus U^{\vee}[1-n]]$.

Consider the triple  $(\Bbbk[U\oplus U^{\vee}[1-n]], \Bbbk[U], \varepsilon)$ 
where $\varepsilon\colon \Bbbk[U\oplus U^{\vee}[1-n]]\to \Bbbk[U]$ is the natural map,
which sends all generators from $ U^{\vee}[1-n]$ to zero.

As in  Definition \ref{sc} denote by  $\Sh{\varepsilon^{\vee}}$ the combinatorial
sheaf over $\SCO^S$ associated with $\varepsilon^{\vee}$.
There is a natural map 
\begin{equation}
 \Sh{\varepsilon^{\vee}}\to (\Bbbk[U\oplus U^{\vee}[1-n]]^{\vee})^S
\label{map0}
\end{equation}
from $\Sh{\varepsilon^{\vee}}$ to the constant sheaf   given by the augmentation.

Fix a $n$-propagator $\mathfrak{p}$. 

For any two-element subset $\{i,j\}\subset S$ 
denote by $p_{ij}\colon \SCO(S)\to \SCO(\set{2})$ the map that forgets
all points except ones marked by $i$ and by $j$. 
Denote by $\mathfrak{p}_{ij}$ the pullback of $\mathfrak{p}$ under projection $p_{ij}$.
Let $\alpha$ be an element of endomorphisms of
$\Bbbk[U\oplus U^{\vee}[1-n] ]^{\otimes S} \tens{Aut (S)}C^*(\SCO(S))$ (where $C^*(-)$ is the de Rham complex)
given by 
$$
\alpha= \sum_{i,j\in S}\partial_\omega^{ij}\wedge \mathfrak{p}_{ij} ,
$$
where $\partial_\omega^{ij}$ is the operator $\partial_\omega$ 
applied to the $i$-th and $j$-th factors, where the operator $\partial_\omega$
is given by (\ref{omega}) for the standard bilinear form of degree $1-n$
on $ U\oplus U^{\vee}[1-n]$.

As in Proposition \ref{prod} consider the map  
\begin{equation}
\Bbbk[U\oplus U^{\vee}[1-n]]^{\otimes S}\to  C^*(\SCO^S)\otimes \Bbbk[U\oplus U^{\vee}[1-n]]
\label{prop}
\end{equation}
given by the composition of $\exp(\alpha)$ and $\mu$.

Composing the map dual to (\ref{prop}) 
\begin{equation}
\begin{aligned}
(\Bbbk[U\oplus U^{\vee}[1-n]])^{\vee}\to C^*(\SCO^S)\otimes (\Bbbk[U\oplus U^{\vee}[1-n]]^{\vee})^{\otimes S} =\\ C^*(\SCO^S,(\Bbbk[U\oplus U^{\vee}[1-n]]^{\vee})^{\otimes S} )
\end{aligned}
\label{map}
\end{equation}
 with 
$$
\varepsilon^{\vee} \colon \Bbbk[U]^{\vee}\to (\Bbbk[U\oplus U^{\vee}[1-n]])^{\vee}
$$
we get the map
\begin{equation}
 \Bbbk[U]^{\vee}\to C^*(\SCO^S,(\Bbbk[U\oplus U^{\vee}[1-n]]^{\vee})^{\otimes S} ).
\label{map2}
\end{equation}

\begin{prop}
For a $n$-propagator $\mathfrak{p}$ the map (\ref{map2}) uniquely factors through the map
induced by the map of sheaves (\ref{map0})
and the resulting map
$$
\Bbbk[U]^{\vee}\to C^*(\SCO^S, \Sh{\varepsilon^{\vee}})
$$
 defines an action of $\We{n}{U\oplus U^{\vee}[1-n]}$ on on the polynomial algebra $\Bbbk[U]$ considered as a  $\fm{n-1}$-algebra.
\label{prop-action}
\end{prop}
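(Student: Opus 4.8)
The plan is to treat the propagator $\mathfrak p$ exactly as the form $\mathfrak v$ was treated in Proposition \ref{prod}, and then to extract the two extra features the statement asserts over that proposition: the factorization through the combinatorial sheaf $\Sh{\varepsilon^\vee}$, and the two axioms of Definition \ref{def-action}. First I would record that (\ref{prop}) is well defined by the same reasoning as in Proposition \ref{prod}: the forms $\mathfrak p_{ij}$ are closed and wedge-commute, so $\exp(\alpha)$ is a finite sum (the $\mathfrak p_{ij}$ are positive-degree forms on a finite-dimensional manifold with corners), and postcomposition with $\mu$ yields a map of complexes into $C^*(\SCO^S)\otimes\Bbbk[U\oplus U^{\vee}[1-n]]$. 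Dualizing and precomposing with $\varepsilon^{\vee}$ gives (\ref{map2}), a priori valued in the constant sheaf $(\Bbbk[U\oplus U^{\vee}[1-n]]^{\vee})^S$.

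The heart of the matter is the factorization. Since $\varepsilon$ is surjective, $\varepsilon^{\vee}$ is injective, so (\ref{map0}) exhibits $\Sh{\varepsilon^{\vee}}$ as a subsheaf of the constant sheaf, with stalk $\Bbbk[U]^{\vee}$ over each open stratum and the full $\Bbbk[U\oplus U^{\vee}[1-n]]^{\vee}$ over closed strata; here $\Bbbk[U]^{\vee}$ is precisely the annihilator of the ideal $\ker\varepsilon=(U^{\vee}[1-n])$, i.e. the span of the duals of monomials with no $U^{\vee}[1-n]$ factor. Factoring (\ref{map2}) through (\ref{map0}) therefore amounts to checking that, along each boundary stratum on which a point becomes open, the coefficient attached to that point extends continuously into $\Bbbk[U]^{\vee}$. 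The only operator that can deposit a $U^{\vee}[1-n]$ factor onto a given tensor slot is $\partial_\omega^{ij}$, and it does so weighted by $\mathfrak p_{ij}$. When a point $j$ approaches $\R{n-1}{=0}$ — i.e. the configuration approaches the eyelid of $\SCO^{2}$ on which $j$ is open — condition (1) of Definition \ref{propagator} forces $\mathfrak p_{ij}\to 0$ for each closed partner $i$, so every term depositing a $U^{\vee}[1-n]$ on $j$ vanishes in the limit; and when the partner is itself open the weight $\mathfrak p_{ij}$ vanishes for dimensional reasons, being the restriction of an $(n-1)$-form to the $(n-2)$-dimensional "eye corner". Hence the limiting coefficient on every open point lies in $\Bbbk[U]^{\vee}$, which is exactly the asserted factorization, and uniqueness is immediate from the injectivity of (\ref{map0}).

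I expect this boundary computation to be the main obstacle, and the delicate point inside it is the bookkeeping of orientations: one must fix how each $p_{ij}$ identifies the pair $\{i,j\}$ with the ordered pair defining the lower eyelid, so that for each open point the deposit onto it is governed by precisely the eyelid along which \emph{that} point is open, namely the one on which $\mathfrak p$ was required to vanish. The relevant consistency is guaranteed by $\gl$-invariance together with the fact that an action is only required to be $Aut(C)\times Aut(O)$-equivariant — open and closed labels being symmetrized separately, cf. Definition \ref{def-action} — rather than equivariant under permutations mixing the two colors.

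Finally I would verify the two axioms of Definition \ref{def-action}. For compatibility, restrict to $\R{n-1}{=0}$: there all points are open, so every $\mathfrak p_{ij}$ drops out by the same eye-corner dimension count, $\exp(\alpha)$ reduces to the identity, and the construction collapses to plain multiplication $\mu$ in $\Bbbk[U]$ — which is the $\fm{n-1}$-algebra structure of $\Bbbk[U]$ (Proposition \ref{obl}). For the factorization axiom, the $p_{ij}$ are pullbacks along the forget-points projections, so the $\mathfrak p_{ij}$ are compatible with operadic composition, and the multiplicativity $\exp(\alpha+\alpha')=\exp(\alpha)\exp(\alpha')$ makes the map descend through the limit diagram of Definition \ref{def-action} — verbatim the reasoning that produces the $\fm{n}$-algebra structure on $\We{n}{U\oplus U^{\vee}[1-n]}$ in Proposition \ref{prod}. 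This closes the argument.
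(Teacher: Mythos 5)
There is a genuine gap, and it is exactly at the point where your proof diverges from the paper's. The paper's proof makes two assertions: the factorization through (\ref{map0}) follows from condition (1) of Definition \ref{propagator} (vanishing on the lower eyelid), and the factorization axiom --- condition (2) of Definition \ref{def-action} --- follows from condition (2) of Definition \ref{propagator} (restriction to the ``iris'' equals the standard volume form $\mathfrak{v}$). You reproduce the first assertion in detail, but you never invoke the iris condition anywhere in your argument. Your claim that the factorization axiom follows ``verbatim'' from the reasoning of Proposition \ref{prod} (pullback-compatibility of the $\mathfrak{p}_{ij}$ plus multiplicativity of $\exp$) is not correct, because that axiom is not a formal descent statement: the upwards arrow in the limit diagram of Definition \ref{def-action} is the left coaction of $\fm{n}$ on $A=\We{n}{U\oplus U^{\vee}[1-n]}$, i.e.\ the \emph{Weyl} algebra structure, which is defined by the specific form $\mathfrak{v}$. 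On a boundary stratum of $\SCO^S$ where a group of closed points collides, the form $\mathfrak{p}_{ij}$ for two colliding points restricts to the pullback, from the $\FM^{\set{2}}$-factor of that stratum, of $\mathfrak{p}$ restricted to the iris; so the restriction of $\exp(\alpha)$ factors as (algebra structure defined by $\mathfrak{p}|_{\mathrm{iris}}$) followed by (the map for the collapsed configuration). Only when $\mathfrak{p}|_{\mathrm{iris}}=\mathfrak{v}$ is this the coaction of the Weyl algebra. A closed form vanishing on the lower eyelid but restricting to, say, $2\mathfrak{v}$ on the iris satisfies every hypothesis you actually use, and your argument would go through for it word for word; yet the resulting maps would not define an action of $\We{n}{U\oplus U^{\vee}[1-n]}$ --- collisions of closed points would be governed by the Weyl algebra attached to $2\omega$. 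So condition (2) of Definition \ref{propagator} is not background hygiene; it is the whole content of the factorization axiom, and your verification of that axiom fails without it.

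A secondary, smaller issue: in the sheaf-factorization step you correctly locate the delicate point --- since $\partial_\omega$ contracts in both directions, a $U^{\vee}[1-n]$-generator in an open slot $j$ can a priori be removed by terms weighted by $\mathfrak{p}_{ij}$ \emph{or} by $\mathfrak{p}_{ji}$, and only the former restricts to the lower eyelid, while the latter restricts to the upper eyelid, where a propagator need not vanish. But your resolution, an appeal to $\gl$-invariance and $Aut(C)\times Aut(O)$-equivariance, is not an argument: neither invariance constrains how the two halves of $\partial_\omega^{ij}$ pair with $\mathfrak{p}_{ij}$ versus $\mathfrak{p}_{ji}$. What is needed is the sign/ordering bookkeeping showing that the term contracting the generator sitting at slot $j$ carries precisely the form whose \emph{second} argument is $j$ (this is the convention under which Example 0 reduces to Kontsevich's angle form). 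Since the paper's own two-line proof also suppresses this bookkeeping, I flag it as unfinished rather than as the principal defect; the principal defect is the unused iris condition above.
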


\begin{proof}
The existence of the unique factorization follows from the first property of the propagator from Definition \ref{propagator}. The second property guarantees the second condition of Definition \ref{sc}.
\end{proof}

Note that to get such an action one could start with a gadget more general, than the propagator:
one can use a closed form on  $\SCO^2$ taking values in $V^{\vee}\otimes V^{\vee}$, which vanishes
on the "lower eyelid" and when restricted on the "iris" equals to the standard volume form
multiplied by the standard bilinear form of degree $1-n$ on $V$.

\subsection{Formality}
Proposition \ref{prop-action} gives an action
of $\We{n}{U\oplus U^{\vee}[1-n]}$ on  the polynomial algebra $\Bbbk[U]$ considered as an  $\fm{n-1}$-algebra.
By Proposition \ref{thomas}, this action gives a morphism of $e_n$-algebras
\begin{equation}
\We{n}{U\oplus U^{\vee}[1-n]}\to \HH{n-1}{\Bbbk[U]}.
\label{act-p}
\end{equation}

\begin{theorem}
Map (\ref{act-p}) is a quasi-isomorphism.
\label{iso}
\end{theorem}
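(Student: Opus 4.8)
The plan is to prove the statement by computing the cohomology of both sides and checking that the induced map is an isomorphism; multiplicativity of the map will then let me reduce the whole verification to a check on algebra generators. First I would identify the cohomology of the source. The underlying complex of $\We{n}{U\oplus U^{\vee}[1-n]}$ is the polynomial algebra $\Bbbk[U\oplus U^{\vee}[1-n]]$ with vanishing internal differential, so its cohomology is $\Bbbk[U\oplus U^{\vee}[1-n]]$ itself: the free graded-commutative algebra on $U$ in degree $0$ and on $U^{\vee}$ in degree $n-1$ (the degree being forced by $\omega$ having degree $1-n$).

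Next I would compute the cohomology of the target. By Proposition \ref{commutative} the factorization complex $\int_{S^{n-2}}\Bbbk[U]$ has cohomology $\Bbbk[U\otimes H_*(S^{n-2})]$, that is $\Bbbk[U\oplus U[2-n]]$, and $\Bbbk[U]$ is the module over it in which the generators coming from the top class of $S^{n-2}$ act by zero (the point-restriction module). A Koszul/HKR computation of $\RHom{\int_{S^{n-2}}\Bbbk[U]}{\Bbbk[U]}{\Bbbk[U]}$ then yields $\Bbbk[U\oplus U^{\vee}[1-n]]$, again free graded-commutative on $U$ in degree $0$ and on a copy of $U^{\vee}$ in degree $n-1$. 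Thus both sides have abstractly isomorphic cohomology, generated in exactly two degrees.

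Since $n\ge 2$, both objects are $e_n$-algebras whose cohomology is graded-commutative, and by Proposition \ref{thomas} the map (\ref{act-p}) is a morphism of $e_n$-algebras; hence it induces a homomorphism of graded-commutative algebras on cohomology. A homomorphism of free graded-commutative algebras is an isomorphism precisely when it is an isomorphism on indecomposables, so it suffices to treat the two degrees carrying generators. In degree $0$ the map is induced by the augmentation $\varepsilon$ together with Proposition \ref{obl}, and is the canonical isomorphism of $\Bbbk[U]$ onto $H^0(\HH{n-1}{\Bbbk[U]})=\Bbbk[U]$ (a single closed point sent to the boundary acts on functions through $\varepsilon$). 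In degree $n-1$ the generators $U^{\vee}$ are sent to the constant-coefficient first-order operators in $\HH{n-1}{\Bbbk[U]}$; that this assignment is an isomorphism follows from non-degeneracy of $\omega$ together with the normalization in Definition \ref{propagator} that the propagator restricts to the standard volume form on the ``iris'', which forces the leading coefficient to be a unit. Being an isomorphism on generators in the only two degrees where generators occur, the comparison map is an isomorphism of the free graded-commutative algebras, hence a quasi-isomorphism.

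The main obstacle I expect is the degree $n-1$ computation: one must extract from the convolution of the action with the propagator the exact coefficient with which a generator $u^{\vee}$ is carried to the corresponding derivation, and show it is invertible. This is precisely where the geometry of $\SCO^{2}$ (the limit, as a closed point approaches the boundary, recorded by the ``iris'') enters, and where the normalization of $\Pro_n^0$ is used in an essential way; the HKR identification of the target in the second step, though standard, is the other place that requires care.
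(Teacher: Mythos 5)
Your proposal is correct and takes essentially the same route as the paper: identify both sides with the free graded-commutative algebra $\Bbbk[U\oplus U^{\vee}[1-n]]$ (the target via Proposition \ref{commutative} and the definition (\ref{HH}), the source via Proposition \ref{obl}), use multiplicativity of (\ref{act-p}) to reduce to a check on the generating space, and verify that check from the explicit description of the morphism preceding Proposition \ref{thomas}. The only differences are cosmetic: the paper works with the explicit $e_{n-1}$ (commutative) structure rather than the commutativity induced on cohomology by the $e_n$-structure, and it leaves the generator check as an easy verification, which you spell out in somewhat more detail.
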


\begin{proof}
Let us first evaluate $\HH{n-1}{\Bbbk[U]}$. By Proposition \ref{commutative}
$\int_{S^{n-2}} \Bbbk[U]=\Bbbk[U\oplus U[n-2]]$. From (\ref{HH}) we get  
$$
 \HH{n-1}{\Bbbk[U]}=\RHom{\int_{S^{n-2}}\Bbbk[U]}{\Bbbk[U]}{\Bbbk[U]}=\Bbbk[U\oplus U^{\vee}[1-n]].
$$
The commutative algebra structure on the $\Bbbk[U\oplus U^{\vee}[1-n]]$
comes from the $e_{n-1}$-algebra structure on $\HH{n-1}{\Bbbk[U]}$
by the very definition of the latter. But 
by Proposition \ref{obl}
$\We{n}{U\oplus U^{\vee}[1-n]}$
as an $e_{n-1}$-algebra is also a free commutative algebra generated by $U\oplus U^{\vee}[1-n]$.
Since (\ref{act-p}) is a morphism of $e_{n-1}$-algebras, to prove
the statement we need to show that (\ref{act-p}) defines an isomorphism 
on generators. This may be easily checked by the explicit
definition of the morphism given before Proposition \ref{thomas}.   
\end{proof}

Combining this theorem with Proposition \ref{poiss} we
get the following corollary.

\begin{cor}
Map (\ref{q-lie}) defines a quasi-isomorphism
between $\LL( \HH{n-1}{\Bbbk[U]})$
and  the Poisson Lie algebra
of $(\Bbbk[U\oplus U^{\vee}[1-n]],\, \omega)$
 as $L_\infty$-algebras, where $\omega$ is
the standard bilinear form on $U\oplus U^{\vee}[1-n]$.
\label{formality}
\end{cor}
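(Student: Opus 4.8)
The plan is to derive the Corollary by applying the functor $\LL$ of Definition \ref{la} to the quasi-isomorphism of Theorem \ref{iso}, and then to identify both ends of the resulting $L_\infty$-morphism together with the morphism itself. The functor $\LL$ is the pull-back along the operad map (\ref{morphism}); it leaves the underlying complex of an $\fm{n}$-algebra unchanged and only reinterprets the operations through $\Li[1-n]\to\fm{n}$. Hence $\LL$ sends a morphism of $\fm{n}$-algebras whose underlying map of complexes is a quasi-isomorphism to an $L_\infty$-morphism whose linear (first Taylor) coefficient is that same quasi-isomorphism. Since an $L_\infty$-morphism is a quasi-isomorphism exactly when its linear coefficient is, the whole statement reduces to Theorem \ref{iso} once the two ends and the shape of the morphism are pinned down.

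First I would identify the source. By Proposition \ref{poiss}, for $n>1$ the $\Li$-algebra $\LL(\We{n}{U\oplus U^{\vee}[1-n]})$ is the Poisson Lie algebra of $(\Bbbk[U\oplus U^{\vee}[1-n]],\,\omega)$, with $\omega$ the standard pairing of degree $1-n$ and all higher operations vanishing. The target is $\LL(\HH{n-1}{\Bbbk[U]})$ by definition. Applying $\LL$ to the $e_n$-quasi-isomorphism (\ref{act-p}) of Theorem \ref{iso} therefore produces an $L_\infty$-quasi-isomorphism from the Poisson Lie algebra to $\LL(\HH{n-1}{\Bbbk[U]})$, which is precisely the Corollary apart from the claim that this morphism coincides with the explicit map (\ref{q-lie}).

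It remains to match the induced $L_\infty$-morphism with (\ref{q-lie}), and this I expect to be the main obstacle. I would compare Taylor coefficients: the operad map (\ref{morphism}) selects in each arity the star-tree chain, i.e. the configuration of $k$ closed points collapsed onto the origin, so the $k$-th coefficient of the $L_\infty$-morphism $\LL$ applied to (\ref{act-p}) is the convolution of the action with exactly the fiberwise-closure chain $c(a_1,\dots,a_k;m_1,\dots,m_l)$ of (\ref{q-lie}). In particular the linear coefficient ($k=1$) is the underlying map of (\ref{act-p}), a quasi-isomorphism by Theorem \ref{iso}, which already forces the quasi-isomorphism claim. For $n=2$ the full identification is Proposition \ref{kont}, the target being the Hochschild complex with the Gerstenhaber bracket as an explicit dg-Lie model of $\LL(\HH{1}{\Bbbk[U]})$; the delicate step is to check at the chain level, for general $n$, that these star-tree components of $\LL$ applied to (\ref{act-p}) agree under convolution with the action with the chains $c$, and that this is compatible with the chosen $e_{n-1}$-model of the Hochschild complex.
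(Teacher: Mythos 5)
Your proposal is correct and takes essentially the same route as the paper: the paper's entire proof is ``combining Theorem \ref{iso} with Proposition \ref{poiss}'', i.e.\ applying the functor $\LL$ to the $e_n$-quasi-isomorphism (\ref{act-p}) and identifying its source with the Poisson Lie algebra, exactly as you do. Your extra discussion of matching the induced $L_\infty$-morphism with the explicit map (\ref{q-lie}) is more than the paper itself offers (it defers that identification to Proposition \ref{kont} in the remark following the corollary, and only for $n=2$), and you rightly flag it as the delicate point rather than claim it is automatic.
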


Being combined  with Proposition \ref{kont},
it gives us the main result of \cite{K}
about the quasi-isomorphism between polyvector fields and the Hochschild
cohomological complex of polynomial algebra.
In that paper only propagator $\Pro_2^0$, which is defined in Subsection \ref{Prop}, 
is used,
although it is mentioned there that any other propagator also does the job.
It is known, that coefficients of the above formality quasi-isomorphism
for this propagator
are given by  integrals similar to multiple zeta values (see e.g. \cite{RW}), in particular,
they are conjecturally irrational.

Results of the next section lead us to the following conjecture.

\begin{conj}
The formality morphism given by Corollary \ref{formality} 
with propagator $\Pro_n^k$ for $k>0$ has rational coefficients.
\label{conj}
\end{conj}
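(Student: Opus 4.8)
The plan is to deduce the conjecture from the comparison announced in the introduction: I would show that the formality morphism (\ref{act-p}) built from the propagator $\Pro_n^k$ coincides with the formality morphism produced by the factorization complex approach of Section~5, and then invoke the fact that the latter has rational coefficients.

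First I would make the coefficients of (\ref{act-p}) explicit. By Proposition~\ref{prop-action} the action of $\We{n}{U\oplus U^\vee[1-n]}$ on $\Bbbk[U]$ is assembled from $\exp(\alpha)$ with $\alpha=\sum_{i,j}\partial_\omega^{ij}\wedge\mathfrak{p}_{ij}$, and by the construction preceding Proposition~\ref{thomas} each structure constant of the resulting $L_\infty$-morphism is obtained by expanding this exponential, contracting with the relevant polynomial generators, and integrating the resulting product $\bigwedge_{(i,j)}\mathfrak{p}_{ij}$ of propagator forms over a chain $[\gamma]$ in the Swiss cheese configuration space $\ScS{\R{n}{\ge 0}}{S}$ --- an Axelrod--Singer type integral in the sense of \cite{AS}. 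Thus every coefficient is an integral $\int_{[\gamma]}\bigwedge_{(i,j)}(\Pro_n^k)_{ij}$.

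Next I would substitute the fibre-integral presentation (\ref{pr-formula}) of $\Pro_n^k$. Each factor $(\Pro_n^k)_{ij}=\int_{\varpi}\mathfrak{v}(s_i,p_j)$ is a push-forward along the sphere bundle whose fibre over the point $t_j$ is the $k$-sphere $S_{t_j}\subset\R{n+k}{}$ of Example~\ref{ex}, and $\mathfrak{v}$ is precisely the standard propagator form of the Weyl $(n+k)$-algebra. By the projection formula the product of these fibre integrals followed by integration over $[\gamma]$ equals a single integral, over a configuration chain of points in $\R{n+k}{}$ in which the auxiliary points are constrained to the spheres $S_{t_j}$, of a product of standard direction forms $\mathfrak{v}$. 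In this shape the coefficient is manifestly built from the $(n+k)$-dimensional geometry rather than from the transcendental integrals attached to $\Pro_n^0$.

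The crucial step is to recognise this constrained $\R{n+k}{}$-configuration integral as a factorization complex computation. The spheres $S_{t}$ are the fibres of a $k$-sphere bundle over the Swiss cheese base $\R{n}{\ge 0}$, so the generalized Fubini / factorization-sheaf push-forward indicated after Proposition~\ref{fubini} should identify fibre integration over these spheres with the factorization complex over $S^k$ of a suitable Weyl $(n+k)$-algebra, which is a $\fm{n}$-algebra by Proposition~\ref{fubini}; under Proposition~\ref{commutative} its underlying complex is again polynomial, with generators matching the target $\Bbbk[U\oplus U^\vee[1-n]]$ computed in Theorem~\ref{iso}. Carrying this identification through would exhibit the $\Pro_n^k$-morphism as the factorization complex formality of Section~5 attached to the sphere $S^k$. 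Since the coefficients of that morphism are rational --- as asserted there and in \cite{WK}, the apparent Axelrod--Singer integrals in the factorization picture evaluate to degrees of maps between compactified configuration spaces rather than to periods --- the coefficients of the $\Pro_n^k$-morphism are rational as well. The main obstacle is precisely this identification at the chain level: one must verify that fibre integration over the spheres $S_t$ implements the factorization-sheaf push-forward on the nose, matching orientations, controlling the degenerations of the Fulton--MacPherson and Swiss cheese compactifications at their boundary strata, and handling the limit $t\to0$ that collapses the closed points onto the origin in the construction before Proposition~\ref{thomas}. Establishing this compatibility uniformly in $n$ and $k>0$, rather than only in the cases ($n=2$, or the $1$-sphere) already settled in Section~5 and in \cite{WK}, is exactly what keeps the statement at the level of a conjecture.
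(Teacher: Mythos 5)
Your strategy is the paper's own: Section 5 carries out precisely the identification you sketch (Proposition \ref{last} shows that the $\Pro_n^k$-propagator action of Proposition \ref{prop-action} coincides with the factorization-complex action of Proposition \ref{main}, obtained by substituting the quasi-isomorphism of Proposition \ref{sphere}), and rationality is then inherited exactly as you argue, because the coefficients of the factorization action (\ref{action2}) are integrals of integral cocycles over integer cycles rather than periods. The only discrepancy is where you locate the residual gap: the paper asserts the chain-level identification of the two actions for all $n$ and $k$, and what confines its proof of the conjecture to dimension two (Corollary \ref{corollary}) is instead that only for $n=2$ does one have an explicit dg-Lie model of the Hochschild complex and an explicit $L_\infty$-morphism (\ref{q-lie}) whose coefficients can be meaningfully compared; for $n>2$ the $L_\infty$-structure on $\LL(\HH{n-1}{M})$ is implicit, so the notion of ``coefficients'' of the formality morphism itself is what remains out of reach.
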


In the next section, we shall prove this conjecture in the two-dimensional case
(Corollary \ref{corollary}).

\section{Factorization complex approach}

\subsection{Factorization complex of a sphere}

By Proposition \ref{fubini} for an invariant $\fm{n}$-algebra $A$ and 
an oriented closed manifold $M^k$ of dimension $k<n$ the complex
$\int_{M^k} A$ is a $\fm{n-k}$-algebra. 
If $A$ is a Weyl $n$-algebra the natural candidate for $\int_{M^k} A$
is a Weyl $(n-k)$-algebra again. 

\begin{conj}
For an oriented closed $k$-manifold $M^k$ and a Weyl $n$-algebra $\We{n}{V}$, where $n>k$, the
factorization complex $\int_{M^k} \We{n}{V}$
is quasi-isomorphic to $\We{n-k}{V\otimes H_*(M^k)}$
as a $\fm{n-k}$-algebra, where $H_*(M^k)$
is the homology of $M^k$ is negatively graded and equipped with the Poincar\'e pairing.
\end{conj}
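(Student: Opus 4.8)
The plan is to exploit the rigidity of Weyl algebras: by Propositions \ref{prod} and \ref{poiss} a Weyl $m$-algebra is completely pinned down by its underlying commutative algebra together with the degree-$(1-m)$ pairing, since its full $\fm{m}$-structure is the exponential $\exp(\alpha)$ of the propagator and its associated $\Li$-algebra collapses to the \emph{honest} Poisson Lie algebra. Accordingly I would split the argument into three parts: (1) identify the underlying commutative algebra of $\int_{M^k}\We{n}{V}$; (2) show the induced $\fm{n-k}$-structure is again of exponential (``Weyl'') type with a propagator obtained by fiber integration; (3) compute the resulting pairing and match it with $\omega\otimes\langle\cdot,\cdot\rangle_{PD}$. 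Once step (3) gives a Weyl $(n-k)$-algebra with this data, Definition \ref{wna} identifies it with $\We{n-k}{V\otimes H_*(M^k)}$.

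For (1), the Weyl $\fm{n}$-structure on $\Bbbk[V]$ is, by Proposition \ref{prod}, the exponential of the propagator, hence a filtered deformation of the commutative multiplication $\mu$ whose associated graded is the polynomial algebra $\Bbbk[V]$. This filtration is inherited by the factorization complex, and the associated graded is $\int_{M^k}\Bbbk[V]$ for the \emph{commutative} algebra $\Bbbk[V]$, whose cohomology is $\Bbbk[V\otimes H_*(M^k)]$ by Proposition \ref{commutative}. Degeneration of the resulting spectral sequence (for the same degree reason, $\ev^2=0$, that makes $\LL(\We{n}{V})$ a strict Poisson algebra in Proposition \ref{poiss}) identifies the underlying complex; by Proposition \ref{obl} this also matches the underlying commutative algebra of the conjectural answer, and pins down the generators as classes $v\otimes c$ with $v\in V$ and $c\in H_*(M^k)$.

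For (2) and (3) I would use the Fubini presentation of Proposition \ref{fubini}: the $\fm{n-k}$-operations on $\int_{M^k}\We{n}{V}$ are built from configurations in $\R{n-k}{}$ thickened by $M^k$, so that the $m$-ary operation is the $M^k$-fiber integral of the $\fm{n}$-operations of $\We{n}{V}$ applied to configurations in $M^k\times\R{n-k}{}$. Tracing this through the exponential formula of Proposition \ref{prod}, the structure maps take the form $\mu\circ\exp(\tilde\alpha)$ with $\tilde\alpha=\sum_{i,j}\partial_\omega^{ij}\wedge\tilde{\ev}_{ij}$, where the induced propagator $\tilde{\ev}$ on the binary collision space $S^{n-k-1}$ is the fiberwise integral of $\ev$ over the $M^k$-directions. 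Since $\tilde{\ev}$ is again a top-degree form on $S^{n-k-1}$, one has $\tilde{\ev}^2=0$, so the very argument of Proposition \ref{poiss} makes all higher operations vanish and the induced $\fm{n-k}$-algebra of Weyl type. It remains to compute the pairing: the binary bracket $\{v\otimes c_1,\,w\otimes c_2\}$ arises by colliding two points in the $\R{n-k}{}$-direction while their $M^k$-coordinates sweep out cycles $c_1,c_2$, which amounts to integrating the configuration-space propagator $\ev$ over $M^k\times M^k$. As in the Axelrod--Singer integrals \cite{AS}, this fiber integral localizes on the diagonal $\Delta\subset M^k\times M^k$ — precisely where the collision is total and $\ev$ restricts to the volume form on the collision sphere $S^{n-1}$ — and so evaluates to the intersection number of $c_1$ and $c_2$, i.e. the Poincar\'e pairing $\langle c_1,c_2\rangle$. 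Hence $\{v\otimes c_1,\,w\otimes c_2\}=\omega(v,w)\,\langle c_1,c_2\rangle$, and a routine degree count confirms the combined pairing $\omega\otimes\langle\cdot,\cdot\rangle_{PD}$ has degree $1-(n-k)$, as required.

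The main obstacle is step (2): rigorously setting up the Fulton--MacPherson compactification of configurations in $M^k\times\R{n-k}{}$ compatibly with the projection to $\R{n-k}{}$, and controlling the boundary contributions via Stokes so that the $M^k$-fiber integral genuinely produces an exponential-of-propagator structure with higher operations vanishing, rather than merely some a priori $\Li$- or $\fm{n-k}$-algebra. Equivalently, the hard part is proving that the induced structure is formal enough to be \emph{literally} a Weyl algebra and that the localization of the propagator integral to the diagonal is exact, with no anomalous boundary corrections. Once the exponential form is established, identifying the induced pairing with the Poincar\'e pairing is the standard diagonal-class computation.
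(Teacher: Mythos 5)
You are attempting to prove what the paper states only as a conjecture: the paper contains no proof of this statement, and the sole case it actually establishes is $M^k=S^k$ (Proposition~\ref{sphere}), by a method different from yours, namely by writing down explicit cycles and checking compatibility with the operad action by hand. So your argument has to stand entirely on its own, and it does not: both load-bearing steps fail exactly where the conjecture is hard. In step (1), you justify degeneration of the spectral sequence ``for the same degree reason, $\ev^2=0$, as Proposition~\ref{poiss}.'' This argument proves too much: it nowhere uses the hypothesis $n>k$, yet the conclusion is false when $n=k$. For instance, $\int_{S^1}\We{1}{V}$ is the Hochschild homology complex of the ordinary Weyl algebra, whose homology is one-dimensional rather than $\Bbbk[V\otimes H_*(S^1)]$; more generally, the factorization homology of $\We{n}{V}$ on a closed oriented $n$-manifold is one-dimensional (this is the basis of the manifold invariants of \cite{W}), even though $\ev_{ij}^2=0$ holds there for every $n>1$. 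The reason the degree argument does not apply is that the higher differentials of your spectral sequence are not controlled by the square of a single $\ev_{ij}$: they are given by pairing chains in $C_*(\CS{M^k\times\R{n-k}{}}{S})$ against products $\ev_{i_1j_1}\wedge\dots\wedge\ev_{i_mj_m}$ over \emph{distinct} pairs, i.e.\ by configuration-space (Feynman-graph) integrals of Axelrod--Singer type, and their vanishing for $n>k$ is part of the content of the conjecture, not a formal consequence of $\ev_{ij}^2=0$.

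Step (2) has the same problem in another guise. The fiber integral along the $M^k$-directions of $\exp\bigl(\sum_{i,j}\partial_\omega^{ij}\wedge\ev_{ij}\bigr)$ is \emph{not} again an exponential of a two-point form: pushing forward produces a sum over graphs in which the external points are joined by chains of propagators passing through points integrated over copies of $M^k$, and only the leading term has the form $\mu\circ\exp(\tilde\alpha)$. Asserting that the induced $\fm{n-k}$-structure ``is again of exponential (Weyl) type'' is therefore assuming precisely what must be proven, and your closing paragraph concedes this (``the main obstacle''); in effect the proposal reduces the conjecture to itself. Your step (3) is correct as far as it goes --- the diagonal localization identifying the induced binary bracket with $\omega(v,w)\,\langle c_1,c_2\rangle$ --- but this only determines the first-order part of the structure, which was never in doubt. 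By contrast, the proof of Proposition~\ref{sphere} avoids any exponentiality claim: it maps $\We{n-k}{V\oplus V[k]}$ to explicit cycles of the form $[p]\otimes x\cup[\Cs{S^k\setminus p}{\set{i}}]\otimes_{\Sigma_i} l_1\otimes\dots\otimes l_i$ and verifies compatibility with the $\fm{n-k}$-structure directly, using commutativity (Proposition~\ref{obl}) and the relations (\ref{reduced}). What is missing for a general $M^k$ --- and what your proposal does not supply --- is a replacement for these cycles when $H_*(M^k)$ is richer than that of a sphere, i.e.\ cycles in $\CS{M^k}{S}$ resolving products of arbitrary homology classes, together with a proof that the resulting map respects the operadic structure.
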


The factorization complex on a sphere is of particular interest,
see e.~g. \cite[Proposition 6.2]{GTZ}.

\begin{prop}
For a natural $k<n$ the $\fm{n-k}$-algebra $\int_{S^k} \We{n}{V} $
is quasi-iso\-mor\-phic to $\We{n-k}{V\oplus V[k]}$,
where $V\oplus V[k]$ is  equipped with the natural perfect pairing
of degree $1-n+k$. 
\label{sphere}
\end{prop}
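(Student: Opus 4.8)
The plan is to compute $\int_{S^k}\We{n}{V}$ in two stages: first identify its underlying complex, and then pin down the $\fm{n-k}$-algebra structure through its associated Lie algebra, imitating the logic at the end of the proof of Theorem \ref{iso}.

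\textbf{Underlying complex.} The Weyl $n$-algebra carries the filtration by polynomial degree whose associated graded $\fm{n}$-algebra is the commutative algebra $\Bbbk[V]$: the operator $\partial_\omega$ of (\ref{omega}) lowers polynomial degree by two, so every correction term $\partial_\omega^{ij}\wedge\ev_{ij}$ appearing in Proposition \ref{prod} vanishes on the associated graded. Since the colimit (\ref{coend}) defining $\int_{S^k}(-)$ is built functorially from the operadic action, this filtration induces one on $\int_{S^k}\We{n}{V}$ whose associated graded is $\int_{S^k}\Bbbk[V]$. By Proposition \ref{commutative} the cohomology of the latter is $\Bbbk[V\otimes H_*(S^k)]=\Bbbk[V\oplus V[k]]$. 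The polynomial filtration on the target $\We{n-k}{V\oplus V[k]}$ has the very same associated graded, so it suffices to produce a filtered map that is an isomorphism on the first page of the resulting spectral sequence; degeneration then forces a quasi-isomorphism of underlying complexes.

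\textbf{Generators and structure.} The generators arise geometrically: the summand $V$ is the image of $V\subset\We{n}{V}$ inserted at a single point of $S^k$ (the class $H_0$), while the summand $V[k]$ is obtained by spreading $V$ over the fundamental cycle $[S^k]\in H_k(S^k)$. By Proposition \ref{fubini}(1) the complex $\int_{S^k}\We{n}{V}$ is an $\fm{n-k}$-algebra, and by Proposition \ref{obl} together with Proposition \ref{commutative} its underlying $\fm{n-k-1}$-algebra is the free commutative algebra on $V\oplus V[k]$. Exactly as in Theorem \ref{iso}, it therefore remains to identify the binary $\fm{n-k}$-operation on generators, i.e. the associated Poisson bracket, whose target degree is $1-(n-k)=1-n+k$.

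\textbf{The pairing.} This bracket is computed by integrating the standard $(n-1)$-form $\ev$ on $\FM(\set{2})=S^{n-1}$ against the relevant cycles in the configuration space of $S^k$. The $\omega$-contraction always contributes degree $1-n$, so to obtain the required degree $1-n+k$ exactly one input must be a $V[k]$-generator (contributing the fundamental cycle, degree $k$) and one a $V$-generator (from $H_0$, degree $0$); the brackets on $V\otimes V$ and on $V[k]\otimes V[k]$ vanish for degree reasons. Pushing $\ev$ forward along the $k$-sphere of collision directions carried by $[S^k]$ turns the $(n-1)$-form into the standard $(n-k-1)$-form on $\mathbf{FM}_{n-k}(\set{2})=S^{n-k-1}$ and multiplies the coefficient by $\omega$ evaluated against the Poincar\'e pairing of $H_*(S^k)$. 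This is precisely the natural perfect pairing $\omega'$ of degree $1-n+k$ on $V\oplus V[k]$. For $n-k>1$ the square of $\ev$ vanishes, so as in Proposition \ref{poiss} no higher $L_\infty$-operations survive and $\LL$ is the Poisson Lie algebra of $(\Bbbk[V\oplus V[k]],\omega')$; matching this against the corresponding description of $\We{n-k}{V\oplus V[k]}$ finishes the identification.

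\textbf{Main obstacle.} The crux is the fiber integration of the previous paragraph: carrying out the push-forward of $\ev$ along the family of collision configurations parametrized by $[S^k]$ and verifying that the surviving contribution is exactly the lower propagator times the Poincar\'e pairing, with the correct sign and degree shift. One must also treat the boundary case $n-k=1$ separately, where the target is the ordinary Weyl algebra, $\ev^2$ need not vanish, and Proposition \ref{poiss} does not apply; there the claim should be checked directly against the defining commutation relations. As an independent cross-check I would use the gluing decomposition $S^k=\Disk^k\cup_{S^{k-1}}\Disk^k$ with Propositions \ref{odisk} and \ref{module}, which yields $\int_{S^k}\We{n}{V}\simeq \We{n}{V}\otimes^{L}_{\int_{S^{k-1}}\We{n}{V}}\We{n}{V}$ and hence an induction on $k$ with base case $\int_{S^0}\We{n}{V}=\We{n}{V\oplus V}$.
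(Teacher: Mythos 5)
Your geometric picture of the generators---$V$ inserted at a single point of $S^k$ and $V[k]$ spread over the fundamental cycle---is exactly right, and it is where the paper's proof starts. But your overall scheme has a genuine gap: you never construct the map on which everything hinges. In your first paragraph you correctly reduce the identification of underlying complexes to ``producing a filtered map that is an isomorphism on the first page,'' but no such map is ever defined; in the remaining paragraphs you replace the construction of a morphism of $\fm{n-k}$-algebras by a computation of structure on cohomology (the free commutative product, plus the Poisson bracket on generators via fiber integration). That substitution is invalid in principle: an $e_m$-algebra is not determined up to quasi-isomorphism by its cohomology together with the induced product and bracket---statements of exactly this shape are formality theorems, which is what this paper is about, and Proposition \ref{sphere} is itself such a statement (the left-hand side is a large complex, the right-hand side is its cohomology endowed with a prescribed Weyl structure). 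The analogy with Theorem \ref{iso} does not rescue the argument, because there the $e_n$-morphism (\ref{act-p}) was already supplied by Proposition \ref{thomas} and only bijectivity on generators had to be checked; here nothing supplies the morphism, and since $\We{n-k}{V\oplus V[k]}$ is not free as an $\fm{n-k}$-algebra, ``defining it on generators'' does not extend automatically.

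The paper's proof consists precisely of the construction you are missing: for a monomial $[x\otimes y]$ with $x\in\Bbbk[V]$ and $y=l_1\cdots l_i\in\Bbbk[V[k]]$ factored into linear terms, one sets
$$
[x\otimes y] \mapsto [p]\otimes x\cup [\Cs{S^k\setminus p}{\set{i}}]\otimes_{\Sigma_i} l_1\otimes\dots\otimes l_i,
$$
i.e.\ $x$ sits at a fixed point $p\in S^k$ and the linear factors of $y$ are spread over the compactified configuration space of $S^k\setminus p$---your generator picture extended multiplicatively to the whole algebra. One then checks that the image is a cycle, using that the Weyl $n$-algebra is commutative as a lower algebra (Proposition \ref{obl}) and that this is the standard cycle in the factorization complex of a polynomial algebra, and that the map respects the $\fm{n-k}$-structure, by direct calculation whose crucial ingredient is the relations (\ref{reduced}). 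Your fiber-integration computation of the bracket and your excision cross-check $\int_{S^k}\We{n}{V}\simeq \We{n}{V}\otimes^{L}_{\int_{S^{k-1}}\We{n}{V}}\We{n}{V}$ are sensible consistency checks (and the degree bookkeeping in them is correct), but neither produces the required morphism, so neither can close the gap.
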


\begin{proof}
Fix a point $p\in S^k$.
Denote by
$[\Cs{S^k\setminus p}{S}]$  the cycle in 
$C_*(\CS{S^k}{S})$ presented by the configuration space of distinct points
of $S^k$  labeled by $S$
 distinct from $p$. Denote by $[p]$ the cycle in 
$C_*(\CS{S^k}{S})$ presented by point $p$.
Let $[x\otimes y]$ be a monomial representing
an element of   $\We{n-k}{V\oplus V[k]}$, where $x\in \Bbbk[V]$ and $y\in \Bbbk[V[k]]$ are  monomials.
Let $y=l_i\cdots l_i$ is a factorization of $y$ into linear factors.

Define a map from  $\We{n-k}{V\oplus V[k]}$
to the factorization complex
$\int_{S^k} \We{n}{V} $
by
$$
[x\otimes y] \mapsto [p]\otimes x\cup [\Cs{S^k\setminus p}{\set{i}}]\otimes_{\Sigma_i} l_1\otimes\dots\otimes l_i
$$
The image  is closed because a Weyl $n$-algebra is commutative
as Weyl $k$-algebra by Proposition \ref{obl} and the cycle above 
is the standard cycle representing a class in the factorization complex
of a polynomial algebra
generalizing the standard class for the
Hochschild complex \cite[Proposition 1.3.12]{Lo} (compare with \cite[Definition 2]{GTZ2}). 
It is easy  to show by the direct calculation,
that this map respects the $\fm{n-k}$-algebra structure.
The crucial point here is using relations (\ref{reduced}).
\end{proof}

\subsection{Action}
\label{action-sec}

As in Subsection \ref{Prop} for
$k\in \mathbb{N}$
define the embedding $\R{n}{}\hookrightarrow\R{n+k}{}$ as the coordinate plane
and consider the projection
\begin{equation}
\R{n+k}{}\to \R{n}{\ge 0},
\label{projection}
\end{equation}
which sends a point to the only intersection
with $\R{n}{}\hookrightarrow\R{n+k}{}$ of the only $k$-sphere in $\R{n+k}{}$,
which contains this point, has its center on the plane $\R{n-1}{=0}\subset\R{n}{}$ and 
lies in the plane perpendicular to this plane.

For an invariant $\fm{n+k}$-algebra $A$ the product of the operad
gives a $\Dil{n-1}$-invariant map
\begin{equation}
A^{\vee}\to \bigoplus\limits_{S} C^*(\Cs{\R{n+k}{}}{S})\tens{Aut(S)} {A^{\vee}}^{\otimes S}
\label{product}
\end{equation}
Denote by ${A^{\vee}}^\otimes$ the locally constant sheaf over
$\coprod_S \Cs{\R{n+k}{}}{S}$ with the fiber equal to $ {A^{\vee}}^{\otimes S}$.
Then the right side of (\ref{product}) is $H^*(\coprod_S \Cs{\R{n+k}{}}{S}, {A^{\vee}}^\otimes)$

Now we need to take the push-forward of the factorization
sheaf with respect to the map (\ref{projection}). 
The following construction, being a generalization of Proposition \ref{fubini},
could be formulated in terms of the relative factorization complex.
But the relative factorization complex is a cosheaf rather
than a sheaf. That is why it is more convenient to work with the linear dual
thing.
Define the dual relative factorization complex $(\int_{\R{n+k}{}/\R{n}{\ge 0}}A)^{\vee}$
of $A$ of the map (\ref{projection}),
which is a complex of sheaves over $\R{n}{\ge 0}$,
in analogy with (\ref{coend}) as the limit of the diagram
\begin{equation}
\begin{tikzcd}
\bigoplus\limits_{S'} C^*_{\R{n}{\ge 0}}(\CS{\R{n+k}{}}{S'})\tens{Aut(S')} {A^{\vee}}^{\otimes S'}\arrow[d]\\
\bigoplus\limits_{i\colon S'\to S}C^*_{\R{n}{\ge 0}}(\Cs{M}{S})\tens{Aut(S)}\bigotimes\limits_{s\in 
S} 
(\fm{k}(i^{-1}s)\tens{Aut(i^{-1}s)} A^{\otimes 
{(i^{-1}s)}})^{\vee} \\
\bigoplus\limits_{S}C^*_{\R{n}{\ge 0}}(\Cs{M}{S})\tens{Aut(S)} {A^{\vee}}^{\otimes S} \arrow[u]
\end{tikzcd}
\label{end}
\end{equation}
where the operad $\fm{k}$ coacts along fibers of (\ref{projection}),
this coaction is trivial over $\R{n-1}{=0}\subset \R{n}{\ge 0}$.

In the diagram
$$
\begin{tikzcd}
C^*(\CS{\R{n+k}{}}{S})\tens{Aut(S)} {A^{\vee}}^{\otimes S}  & C^*(\R{n}{\ge 0}, (\int_{\R{n+k}{}/\R{n}{\ge 0}}A)^{\vee}) \arrow[l]\\
\qquad \qquad \qquad A^{\vee}\arrow[u] = (\int_{\R{n+k}{}} A)^{\vee} \arrow[ru, dashed]&
\end{tikzcd}
$$
the isomorphism in the bottom row is given by Proposition \ref{odisk},
the horizontal arrow is the embedding in the top term 
of the diagram (\ref{end}) and the vertical is dual to the action of $\fm{n+k}$
operad on $A$. The  dashed arrow, which makes the diagram commutative,
exists because relations  dual to (\ref{end}) are contained in 
the relation defining the big factorization complex $\int_{\R{n+k}{}} A$.
The dashed arrow is $\Dil{n-1}$-invariant and it gives us a map
\begin{equation}
A^{\vee}\to C^*(\SCO^S, (\int_{\R{n+k}{}/\R{n}{\ge 0}}A)^{\vee})
\label{action2}
\end{equation}

Recall that for a $\fm{n}$-algebra $A$ and $m<n$ we denote by $\obl{n}{m} A$
the $\fm{m}$-algebra with the same underlying complex as $A$
and the operadic structure induced by the natural map of operads $\fm{m}\to \fm{n}$.
A particular case of the following proposition was crucial for the second part of \cite{WK}.

\begin{prop}
For an invariant $\fm{n+k}$-algebra $A$ consider the triple
$$(\int_{S^k} A, \,\obl{n+k}{n-1} A,\, \varepsilon),$$ where
$\int_{S^k} A$ is an $\fm{n}$-algebra due to Proposition \ref{fubini}
and $\varepsilon\colon \int_{S^k} A \to A$ is the natural map
induced by the map from the sphere to a point.
Then the complex $\mathcal{F}_\varepsilon^{\vee}$ is quasi-isomorphic
to the complex  $(\int_{\R{n+k}{}/\R{n}{\ge 0}}A)^{\vee}$
defined above and  map (\ref{action2})
defines an action of $\int_{S^k} A$ on $\obl{n+k}{n-1} A$
in the sense of Definition \ref{def-action}.
\label{main}
\end{prop}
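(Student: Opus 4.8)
The plan is to prove the two assertions separately: first identify the sheaf $\Sh{\varepsilon^{\vee}}$ with the dual relative factorization complex $(\int_{\R{n+k}{}/\R{n}{\ge 0}}A)^{\vee}$ as constructible complexes of sheaves on $\ScS{\R{n}{\ge 0}}{S}$, and then check that the map (\ref{action2}) satisfies the two conditions of Definition \ref{def-action}.

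For the first assertion I would analyse the geometry of the projection (\ref{projection}): its fiber over an interior point of $\R{n}{>0}$ is a $k$-sphere, while its fiber over a boundary point of $\R{n-1}{=0}$ degenerates to a single point, as the radius of the collapsing sphere tends to zero. Consequently the stalks of $(\int_{\R{n+k}{}/\R{n}{\ge 0}}A)^{\vee}$ can be computed fiberwise. Over the open (interior, closed-point) stratum, Proposition \ref{fubini} identifies the fiberwise factorization complex with $\int_{S^k} A$, whose dual is $(\int_{S^k} A)^{\vee}$; over the closed (boundary, open-point) stratum, homotopy invariance together with Proposition \ref{odisk} gives $\int_{\text{pt}} A = A$, whose dual is $A^{\vee}=\obl{n+k}{n-1}A^{\vee}$. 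Since (\ref{projection}) is locally trivial over each stratum, these stalks are locally constant there, so the complex of sheaves is $\varpi$-combinatorial, i.e. constant on each stratum of the stratification (\ref{strat}). For a general labelling set $S$ the fibers are products and the resulting sheaf is the corresponding tensor power, matching the tensor-power construction of $\Sh{\varepsilon^{\vee}}$.

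The crux is to identify the cospecialisation map of this constructible sheaf, which runs from the boundary stalk $M^{\vee}=\obl{n+k}{n-1}A^{\vee}$ to the interior stalk $(\int_{S^k} A)^{\vee}$, with the structure map $\varepsilon^{\vee}$ prescribed in the construction of $\Sh{\varepsilon^{\vee}}$ (Definition \ref{sc}). Here I would argue that as an interior point approaches the boundary its collapsing fiber-sphere realises the map $S^k \to \text{pt}$; by functoriality of the factorization complex this induces precisely the augmentation $\varepsilon\colon \int_{S^k} A \to A$ of the triple, and dualising yields $\varepsilon^{\vee}$. \emph{This degeneration is the main obstacle}: one must control the Fulton--MacPherson compactification near the locus where the sphere collapses, so that the fiberwise gluing relations (the relative analogue of (\ref{reduced})) produce exactly the cospecialisation $\varepsilon^{\vee}$ and not some homotopically distinct map. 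Granting this, the two complexes of sheaves agree stalkwise with matching cospecialisation maps, hence are quasi-isomorphic as $\varpi$-combinatorial complexes.

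For the second assertion I would verify the two conditions of Definition \ref{def-action} directly. The factorization condition is essentially formal: by construction the dashed arrow factors through the relations defining (\ref{end}), and these are contained in the relations defining the full factorization complex $\int_{\R{n+k}{}} A$, so (\ref{action2}) factors through the required limit. For the compatibility condition I would restrict (\ref{action2}) to the boundary $\R{n-1}{=0}$, where the fiber of (\ref{projection}) is a point and all configuration points are open; there the relative factorization complex reduces to the ordinary factorization complex of $A$ along the $(n-1)$-dimensional boundary, which by the very definition of $\obl{n+k}{n-1}$ is the $\fm{n-1}$-algebra structure on $M=\obl{n+k}{n-1} A$. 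Both conditions then follow once the identification of the first part is in place, completing the proof.
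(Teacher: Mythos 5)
Your proposal is correct and takes essentially the same approach as the paper: the paper's own (two-sentence) proof consists precisely of the two assertions you elaborate, namely that $(\int_{\R{n+k}{}/\R{n}{\ge 0}}A)^{\vee}$ is constructible with respect to the stratification (\ref{strat}) with fibers and restriction map equal to those of $\mathcal{F}_\varepsilon^{\vee}$ (hence the two are quasi-isomorphic), and that the action axioms follow from the definitions of an invariant $\fm{n+k}$-algebra and of the factorization complex. The step you flag as the main obstacle --- controlling the Fulton--MacPherson compactification where the fiber sphere collapses so that the cospecialization map is exactly $\varepsilon^{\vee}$ --- is precisely what the paper compresses into ``one may see'', so your write-up is, if anything, more detailed than the original.
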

\begin{proof}
One may see that the complex $(\int_{\R{n+k}{}/\R{n}{\ge 0}}A)^{\vee}$ is constructible
with respect to the stratification (\ref{strat})
with fibers and the restriction map equal to ones of  $\mathcal{F}_\varepsilon^{\vee}$,
what follows that they are quasi-isomorphic.
Axioms of the action (Definition \ref{def-action})
follow from the very definition of $\fm{n+k}$-algebra and the one of the factorization complex.
\end{proof}

Applying Proposition \ref{thomas} to the action constructed
above we get for any $\fm{n+k}$-algebra $A$ a natural map of $\fm{n}$-algebras.
\begin{equation}
\int_{S^k} A \to \HH{n-1}{\obl{n+k}{n-1} A},
\label{HH-act}
\end{equation}
where $\HH{n-1}{\cdot}$ is the higher Hochschild cohomology, see Subsection \ref{HHC}.

\subsection{Formality via the factorization complex}

We are going to show that for Weyl $(n+k)$-algebras map (\ref{HH-act})
is a quasi-isomorphism.

Recall, that by Proposition \ref{sphere} the $\fm{n}$-algebra
$\int_{S^k} \We{n+k}{V}$ is isomorphic to $\We{n}{V\oplus V[k]}$,
and $\obl{n+k}{n-1} \We{n+k}{V}$ is isomorphic to  $\Bbbk[V]$
by Proposition \ref{obl}.

\begin{prop}
For a Weyl $(n+k)$-algebra $\We{n+k}{V}$ the action of
$\int_{S^k} \We{n+k}{V}=\We{n}{V\oplus V[k]}$
on $\obl{n+k}{n-1} \We{n+k}{V}=\Bbbk[V]$
defined by Proposition \ref{main}
is isomorphic to the action defined by Proposition
\ref{prop-action} for propagator $\Pro_n^k$ given by  (\ref{pr-formula}).
\label{last}
\end{prop}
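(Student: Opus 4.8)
The plan is to reduce the comparison to a statement about two-point configurations and then to recognize the resulting form as $\Pro_n^k$. Both sides are actions of $\We{n}{V\oplus V[k]}$ on $\Bbbk[V]$ in the sense of Definition \ref{sc}, i.e.\ maps $\Bbbk[V]^{\vee}\to C^*(\SCO^S,\Sh{\varepsilon^{\vee}})$, and by the factorization property (item (2) of Definition \ref{sc}) such a map is determined by the underlying $\We{n}{V\oplus V[k]}$-structure together with the forms describing the pairwise interaction of points on $\SCO^2$. So it is enough to show that the two constructions assign the same interaction form to each pair of points, and that this form is $\Pro_n^k$.

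The essential point is that the geometry of the two constructions literally coincides. The projection (\ref{projection}) used to build the factorization action is exactly the map whose fibres are the spheres $S_t$ of Example \ref{ex}, and the $(n+k-1)$-form defining the $\fm{n+k}$-structure on $\We{n+k}{V}$ via Proposition \ref{prod} is the same standard form $\mathfrak{v}$ occurring in (\ref{pr-formula}). I would start from the coaction (\ref{product}), which for the Weyl algebra is dual to $\exp(\sum_{i,j}\partial_\omega^{ij}\wedge\mathfrak{v}_{ij})$, and push it forward along the fibration as in (\ref{action2}). Under the identification of Proposition \ref{sphere}, a generator from $V$ is placed at a marked point of a fibre sphere while a generator from $V[k]$ is the fundamental cycle $[\Cs{S^k\setminus p}{\cdot}]$ of that sphere, so the pairing $\omega$ of $V$, under this splitting, couples a $V$-factor sitting at a point $s$ to a $V[k]$-factor spread over the sphere $S_t$. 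Fibre-integrating the corresponding $\mathfrak{v}(s,p)$ over $p\in S_t$ is, by definition, the integral $\int_{\varpi}\mathfrak{v}(s,p)$ of (\ref{pr-formula}); hence the interaction form produced by the pushforward is exactly $\Pro_n^k$.

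It remains to see that the other pairings contribute nothing, which is forced by the structure already fixed in Proposition \ref{sphere}: the induced pairing on $V\oplus V[k]$ couples $V$ with $V[k]$ and vanishes on $V\otimes V$ and on $V[k]\otimes V[k]$, so the closed--closed part of both actions is the Weyl $n$-algebra $\We{n}{V\oplus V[k]}$ with no $V$--$V$ or $V[k]$--$V[k]$ bracket, while the open points, carrying only $V$ after applying $\varepsilon$, interact with the closed $V[k]$-generators again through $\Pro_n^k$ on the eyelid and trivially with everything else. Thus both the factorization action (after the identification $\Sh{\varepsilon^{\vee}}\simeq(\int_{\R{n+k}{}/\R{n}{\ge 0}}A)^{\vee}$ of Proposition \ref{main}) and the propagator action of Proposition \ref{prop-action} are given by $\exp(\sum_{i,j}\partial_\omega^{ij}\wedge(\Pro_n^k)_{ij})$ followed by multiplication, so they coincide.

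I expect the main obstacle to be the honest justification of the pushforward step, since fibre integration is not multiplicative and one must check that pushing forward $\exp(\sum_{i,j}\partial_\omega^{ij}\wedge\mathfrak{v}_{ij})$ really yields the exponential of the fibre-integrated pairwise forms. The saving feature is that $\partial_\omega$ couples each $V$-factor (a marked point) to a single $V[k]$-factor (a sphere cycle), so the fibre directions relevant to distinct couplings are independent and the exponential factorizes through the one-pair integral (\ref{pr-formula}). The accompanying bookkeeping --- compatibility with the relations (\ref{reduced}) underlying Proposition \ref{sphere} and with the limit of Definition \ref{sc}, the degeneration of the fibre to a point over the open stratum $\R{n-1}{=0}$, and the behaviour over the boundary of $\SCO^2$ (the eyelids and the iris) --- is controlled by the already-established fact in Example \ref{ex} that $\Pro_n^k$ is a genuine propagator, and so amounts to tracking orientations and signs rather than a new idea.
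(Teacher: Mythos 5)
Your proposal is correct and follows essentially the same route as the paper: the paper's proof consists of substituting the quasi-isomorphism of Proposition \ref{sphere} into the definition of the action from Proposition \ref{main}, whereupon the fibre integration of $\mathfrak{v}$ over the spheres $S_t$ reproduces formula (\ref{pr-formula}) for $\Pro_n^k$ -- exactly your central step. Your additional checks (vanishing of the $V$--$V$ and $V[k]$--$V[k]$ couplings, factorization of the exponential through one-pair fibre integrals) are details the paper compresses into the word ``immediately,'' so you have simply made the same argument explicit.
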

\begin{proof}
Substituting quasi-isomorphism from Proposition \ref{sphere}
to the definition of the action from Proposition \ref{main}
we immediately get the statement.  
\end{proof}

Proposition \ref{thomas} gives
a map of $\fm{n}$-algebras
\begin{equation}
\int_{S^k} \We{n+k}{V} \to \HH{n-1}{\obl{n+k}{n-1} \We{n+k}{V}}=\HH{n-1}{\Bbbk[V]}
\label{WHH}
\end{equation}

\begin{theorem}
The map (\ref{WHH}) is  a quasi-isomorphism with rational coefficients.
\end{theorem}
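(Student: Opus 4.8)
The plan is to reduce the statement to results already established in the excerpt, so that essentially no new argument is needed. The key observation is that the map (\ref{WHH}) is obtained by applying Proposition \ref{thomas} to the action constructed in Proposition \ref{main}, whereas the map (\ref{act-p}) of the propagator approach is obtained by applying the same Proposition \ref{thomas} to the action constructed in Proposition \ref{prop-action}. Proposition \ref{last} tells us that these two actions are isomorphic, provided one takes the propagator $\Pro_n^k$ in the propagator construction. Since the morphism of $e_n$-algebras produced by Proposition \ref{thomas} is natural in the action, isomorphic actions yield the same morphism (up to the identifications $\int_{S^k}\We{n+k}{V}\cong\We{n}{V\oplus V[k]}$ and $\obl{n+k}{n-1}\We{n+k}{V}\cong\Bbbk[V]$ of Propositions \ref{sphere} and \ref{obl}). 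Therefore (\ref{WHH}) is identified with (\ref{act-p}) for the propagator $\Pro_n^k$.

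With this identification in hand, the first assertion, that (\ref{WHH}) is a quasi-isomorphism, is immediate from Theorem \ref{iso}, which states precisely that (\ref{act-p}) is a quasi-isomorphism for any $n$-propagator, and $\Pro_n^k$ is a propagator by the verification in Example \ref{ex}. So the first half of the proof is a one-line appeal to Theorem \ref{iso} via Proposition \ref{last}.

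The second assertion, rationality of the coefficients, is where the content lies and is the step I expect to be the main obstacle. The point is that, unlike the propagator approach, where the coefficients of the formality morphism are given by transcendental-looking integrals of the form $\int_\varpi\mathfrak{v}$ over configuration spaces (and are only known to be rational in special cases), the factorization-complex construction of (\ref{WHH}) is purely geometric: it is assembled from the quasi-isomorphism of Proposition \ref{sphere}, which sends generators to explicit fundamental classes $[p]\otimes x\cup[\Cs{S^k\setminus p}{\set{i}}]\otimes l_1\otimes\cdots\otimes l_i$ of configuration spaces, together with the combinatorial gluing data of the factorization complex encoded in relations (\ref{reduced}). I would argue that every structure constant arising in (\ref{WHH}) is computed by pairing such integral fundamental classes against the operadic coaction, an operation that produces intersection numbers of cycles rather than integrals of transcendental forms; these are integers, hence rational. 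Concretely, I would trace through the definition of the action in Proposition \ref{main}: the coaction of $\fm{n+k}$ and the projection (\ref{projection}) preserve the integral lattice of chains, and the duality pairing defining the map $A\to\HH{n-1}{M}$ before Proposition \ref{thomas} reduces, on these integral classes, to counting intersections with the codimension-one strata of $\SCO^S$.

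The delicate part is making ``rational coefficients'' precise: one must fix rational (indeed integral) models of all the chain complexes involved, check that the quasi-isomorphism of Proposition \ref{sphere} is defined over $\mathbb{Q}$ (which it is, being built from fundamental classes of configuration spaces with no auxiliary choice of form), and confirm that the identification of Proposition \ref{last} does not reintroduce the transcendental integrals $\int_\varpi\mathfrak v$ of the propagator side. The resolution is conceptual rather than computational: the factorization-complex construction never integrates $\Pro_n^k$ against anything, so the transcendental integrals of the propagator description are an artifact of translating into the language of Example \ref{ex} and are absent from the native geometric description. I would therefore phrase the rationality claim as a consequence of the fact that (\ref{WHH}) factors through the $\mathbb{Q}$-chain-level maps of Propositions \ref{sphere}, \ref{main}, and \ref{thomas}, and note that this is exactly the phenomenon predicted by Conjecture \ref{conj}, of which the present theorem is the two-parameter geometric incarnation.
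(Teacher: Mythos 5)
Your proposal matches the paper's proof: the quasi-isomorphism is obtained, exactly as in the paper, by combining Theorem \ref{iso} with Proposition \ref{last}, and rationality is deduced from the fact that the factorization-complex construction is purely geometric and never integrates the propagator $\Pro_n^k$. The only cosmetic difference is that the paper states the integrality mechanism as ``integration of an integral cocycle by an integer cycle,'' the cocycle coming from the operadic product of $\fm{n+k}$, where you instead speak of intersection numbers and preservation of the integral lattice of chains --- but this is the same observation.
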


\begin{proof}
Combining Theorem  \ref{iso} with Proposition \ref{last}
we get the statement. To see that coefficients are rational
note that the action is given by the product in 
the operad $\fm{n+k}$. Thus
 coefficients of the action (\ref{action2})
are given by integration of an integral cocycle by an integer cycle. 
\end{proof}

Combining this theorem with Proposition \ref{poiss} and Proposition \ref{sphere} we get the following corollary.
\begin{cor}
Map (\ref{WHH}) gives a quasi-isomorphism between two  $L_\infty$-algebras: 
the higher Hochschild cohomology Lie algebra
$\LL(\HH{n-1}{\Bbbk[V]})$  and the Poisson Lie algebra
of $(\Bbbk[V\oplus V^{\vee}[n]], \omega)$,
where $\omega$ is the standard bilinear form on $[V\oplus V^{\vee}[n]$
with rational coefficients.
\end{cor}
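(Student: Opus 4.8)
The plan is to obtain the corollary by applying the functor $\LL(-)$ of Definition \ref{la} to the quasi-isomorphism of $\fm{n}$-algebras just established, after first composing it with the comparison map of Proposition \ref{sphere}. Concretely, Proposition \ref{sphere} (with $n$ replaced by $n+k$) supplies a quasi-isomorphism of $\fm{n}$-algebras $\We{n}{V\oplus V[k]}\xrightarrow{\ \sim\ }\int_{S^k}\We{n+k}{V}$, and precomposing (\ref{WHH}) with it produces a quasi-isomorphism of $\fm{n}$-algebras
$$
\We{n}{V\oplus V[k]}\xrightarrow{\ \sim\ }\HH{n-1}{\Bbbk[V]}.
$$

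The key functoriality observation is that $\LL(-)$ preserves quasi-isomorphisms. Indeed, by Definition \ref{la} it is the pullback along the fixed operad morphism $\Li[1-n]\to\fm{n}$ of Proposition \ref{lie}; this leaves the underlying complex unchanged and only restricts the operadic structure, so a quasi-isomorphism of $\fm{n}$-algebras is carried to a quasi-isomorphism of $L_\infty$-algebras. Applying $\LL(-)$ to the displayed map therefore yields a quasi-isomorphism of $L_\infty$-algebras $\LL(\We{n}{V\oplus V[k]})\xrightarrow{\ \sim\ }\LL(\HH{n-1}{\Bbbk[V]})$. Since $n>1$, Proposition \ref{poiss} identifies the source $\LL(\We{n}{V\oplus V[k]})$ with the Poisson Lie algebra of $(\Bbbk[V\oplus V[k]],\omega)$, where $\omega$ is the perfect pairing of degree $1-n$ of Proposition \ref{sphere} and the bracket is determined by $\omega$ on generators; this gives the asserted quasi-isomorphism of $L_\infty$-algebras.

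For rationality it suffices to note that the operad morphism $\Li[1-n]\to\fm{n}$ is defined over $\mathbb Q$, being given by the classes $[\mu^{-1}(\cdot)]$ of configuration spaces, so the restriction $\LL(-)$ introduces no denominators and the rational coefficients of (\ref{WHH}) established in the theorem are inherited by the $L_\infty$-morphism. I expect no genuine obstacle here, as all the substantive content already resides in the theorem and in Propositions \ref{sphere} and \ref{poiss}; the only step deserving explicit mention is the functoriality of $\LL(-)$ with respect to quasi-isomorphisms, which is immediate from its definition as a pullback along a fixed operad map.
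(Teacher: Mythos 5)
Your proposal is correct and takes essentially the same route as the paper, whose proof is precisely to combine the preceding theorem with Proposition \ref{poiss} and Proposition \ref{sphere}. The details you add --- that $\LL(-)$ is pullback along a fixed, rationally defined operad morphism and hence preserves quasi-isomorphisms and rationality of coefficients --- are exactly what the paper leaves implicit.
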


Let $V$ be a vector space with a perfect symmetric pairing.
Applying this corollary to Weyl 3-algebra $\We{3}{V[1]}$ and
combining it with Corollary \ref{formality} 
and Proposition \ref{last}
we get Conjecture \ref{conj} in dimension two.

\begin{cor}
For a vector space  $V$ 
the formality morphism given by Corollary \ref{formality}
with propagator $\Pro_2^k$ for $k>0$
gives a quasi-isomorphism between
$\LL(\HH{1}{\Bbbk[V]})$ and the Lie algebra of polyvector fields $\Bbbk[V\oplus V^{\vee}[1]]$ with rational coefficients.
\label{corollary}
\end{cor}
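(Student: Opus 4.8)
The plan is to recognise the propagator formality of Corollary \ref{formality} for $\Pro_2^k$ as literally the factorization-complex formality (\ref{WHH}), and then to import the rationality already established on the factorization side. The single identity that welds the two constructions together is Proposition \ref{last}.

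Concretely, I would fix $n=2$ and, for the given $k>0$, form the Weyl $(2+k)$-algebra on the shift of $V$ for which the perfect symmetric pairing induces the required skew pairing of degree $1-(2+k)$; for $k=1$ this is $\We{3}{V[1]}$, the degree $-2$ skew form on $V[1]$ coming from the symmetric form on $V$. Proposition \ref{obl} identifies $\obl{2+k}{1}\We{2+k}{V[1]}$ with the polynomial algebra, so the Hochschild target of (\ref{WHH}) is $\HH{1}{\Bbbk[V]}$, while Proposition \ref{sphere} identifies the source $\int_{S^1}\We{3}{V[1]}$ with a Weyl $2$-algebra whose underlying commutative algebra is the polyvector fields $\Bbbk[V\oplus V^{\vee}[1]]$ carrying its standard form.

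The crux is then a one-line appeal to Proposition \ref{last}: the action built in Proposition \ref{main}, whose convolution through Proposition \ref{thomas} produces the factorization morphism (\ref{WHH}), is, under the two identifications just made, the very action built in Proposition \ref{prop-action} from the propagator $\Pro_2^k$. Because the morphism of $e_n$-algebras attached by Proposition \ref{thomas} is a functor of the action alone, the two formality morphisms coincide on the nose; passing to associated $\Li$-algebras via $\LL$, the morphism (\ref{WHH}) becomes the morphism of Corollary \ref{formality} computed with $\mathfrak p=\Pro_2^k$. The quasi-isomorphism property is then Corollary \ref{formality} itself, valid for every propagator, with the target pinned down as the Poisson Lie algebra of $\Bbbk[V\oplus V^{\vee}[1]]$ by Proposition \ref{poiss}; the rationality is inherited from the factorization side, where, as observed in the proof that (\ref{WHH}) has rational coefficients, the action (\ref{action2}) is the operadic product in $\fm{2+k}$, so that each structure constant is the pairing of an integral cocycle with an integer cycle.

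The step I expect to cost the most care is not the conceptual matching but the shift bookkeeping around it: one must reconcile the grading conventions in Proposition \ref{sphere}, where the sphere homology is placed in negative degrees, with those in Corollary \ref{formality}, where the dual enters as $U^{\vee}[1-n]$, so that the target comes out as exactly $\Bbbk[V\oplus V^{\vee}[1]]$ with the standard form; and one must check that the identification of actions in Proposition \ref{last} is an isomorphism of the full action data over $\SCO^{\bullet}$, not merely of underlying chains, so that the induced $\Li$-morphisms—and not only the linear maps on generators—agree. For general $k>0$ one repeats the argument verbatim with the Weyl $(2+k)$-algebra on the appropriate shift of $V$ realising $\Pro_2^k$ through Proposition \ref{last}.
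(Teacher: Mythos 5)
Your proposal follows the paper's own proof exactly: the paper obtains this corollary by applying the (unnumbered) corollary on rationality of the quasi-isomorphism (\ref{WHH}) to the Weyl $3$-algebra $\We{3}{V[1]}$, for $V$ carrying a perfect symmetric pairing, and combining it with Corollary \ref{formality} and Proposition \ref{last} --- precisely your chain of identifications, including importing rationality from the integral-cycle/integral-cocycle description of the action (\ref{action2}). The only caveat --- one shared with the paper, which explicitly exhibits only the case $k=1$ --- is that your closing claim to ``repeat the argument verbatim with the Weyl $(2+k)$-algebra on the appropriate shift of $V$'' cannot be taken literally for even $k$: a graded space concentrated in a single degree admits no perfect pairing of the odd degree $-1-k$ required to form a Weyl $(2+k)$-algebra, so those cases need a hyperbolic space such as $P\oplus P^{\vee}[-1-k]$ together with the (implicit) universality of the formality coefficients in the underlying vector space.
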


\bibliographystyle{alpha}
\bibliography{swiss}

\end{document}